\newtheorem{theorem}{Theorem}
\newtheorem{proposition}{Proposition}
\newtheorem{corollary}{Corollary}
\newtheorem{lemma}{Lemma}
\theoremstyle{definition}
\newtheorem{definition}{Definition}
\newcommand{\RR}{\mathbb{R}}
\newcommand{\Linf}{$L_\infty$}
\newcommand{\Ainf}{$A_\infty$}
\newcommand{\F}{\mathbb{F}}
\DeclareMathOperator{\MC}{MC}
\DeclareMathOperator{\End}{End}
\newcommand{\h}{\mathbf{h}}
\newcommand{\p}{\mathbf{p}}
\renewcommand{\i}{\mathbf{i}}
\newcommand{\Gg}{\mathfrak{g}}
\DeclareMathOperator{\C}{\mathsf{C}}
\DeclareMathOperator{\Hom}{Hom}
\newcommand{\LL}{\widetilde{\mathsf{Lie}}}
\renewcommand{\L}{\mathsf{Lie}}
\renewcommand{\S}{\mathsf{Kan}}
\newcommand{\CF}{\mathcal{F}}
\newcommand{\CV}{\mathcal{V}}
\newcommand{\CW}{\mathcal{W}}
\DeclareMathOperator{\Ho}{Ho}
\DeclareMathOperator{\gr}{gr}
\newcommand{\D}{\mathsf{D}}
\newcommand{\G}{\mathcal{G}}
\newcommand{\ohat}{\widehat{\otimes}}
\newcommand{\DR}{\Upomega}
\newcommand{\pullback}%
{\arrow[dr,phantom,"\scalebox{1.5}{$\lrcorner$}",very near start]}
\renewcommand{\d}{\mathsf{d}}
\let\exp\undefined
\DeclareMathOperator{\exp}{\mathsf{e}}
\begin{document}

\title{Higher holonomy for curved \Linf-algebras 1: simplicial methods.}

\author{Ezra Getzler} \address{Department of Mathematics, Northwestern
  University, Evanston, Illinois, USA}

\maketitle

\section*{Introduction}

In this article, we work with pro-nilpotent curved
\Linf-algebras. These generalize nilpotent differential graded (dg)
Lie algebras.

A \textbf{filtered graded vector space} is a graded vector space over
a field $\F$ with a complete decreasing filtration
\begin{equation*}
  V = F^0V \supset F^1V \supset \ldots \supset 0
\end{equation*}
of graded subspaces. Filtered graded vector spaces form a symmetric
monoidal category: the completed tensor product is
\begin{equation*}
  F^p(V \ohat W) = \lim_{q\to\infty} \biggl( \sum_{i+j=p} F^iV \otimes F^jW \biggr)
  \Bigm/ \biggl( \sum_{i+j=q} F^iV \otimes F^jW \biggr) .
\end{equation*}
We assume that $\F$ has characteristic zero.

A \textbf{curved \Linf-algebra} is a filtered graded vector space $L$
with multilinear brackets
\begin{equation*}
  \{x_1,\ldots,x_k\} : F^{p_1}L^{\ell_1} \times \ldots \times F^{p_k}L^{\ell_k} \to
  F^{p_1+\ldots+p_k}L^{\ell_1+\cdots+\ell_k+1} , \quad k\ge0 ,
\end{equation*}
satisfying the following conditions:
\begin{enumerate}[1)]
\item for $1\le i<k$,
  $\{x_1,\ldots,x_i,x_{i+1},\ldots,x_k\} = (-1)^{\ell_i\ell_{i+1}}
  \{x_1,\ldots,x_{i+1},x_i,\ldots,x_k\}$;
\item for $n\ge0$,
  \begin{equation*}
    \hspace{-\leftmargin}
    \sum_{\pi\in S_n} \sum_{k=0}^n \frac{(-1)^\epsilon}{k!(n-k)!} \,
    \{\{x_{\pi(1)},\ldots,x_{\pi(k)}\},x_{\pi(k+1)},\ldots,x_{\pi(n)}\} = 0 ,
  \end{equation*}
  where $(-1)^\epsilon$ is the sign associated by the Koszul sign rule to the
  action of the permutation $\pi$ on the elements $(x_1,\ldots,x_n)$ of the
  graded vector space $L$.
\end{enumerate}
An \Linf-algebra is a curved \Linf-algebra such that
$\{x_1,\ldots,x_k\}=0$ for $k=0$. The \textbf{curvature} of a curved
\Linf-algebra $L$ is the element $\{\}\in L^1$.

A curved \Linf-algebra is \textbf{pro-nilpotent} if $L=F^1L$. In this
article, all curved \Linf-algebras are assumed to be
pro-nilpotent.

\begin{definition}
  Let $L$ be a curved \Linf-algebra. Its \textbf{Maurer--Cartan locus}
  $\MC(L)$ is the set of solutions of the Maurer--Cartan equation
  \begin{equation*}
    \MC(L) = \biggl\{ x \in L^0 \biggm| \sum_{n=0}^\infty \frac{1}{n!} \{x^{\otimes
        n}\} = 0 \in L^1 \biggr\} .
  \end{equation*}
\end{definition}

The Maurer-Cartan equation makes sense because $L$ is pro-nilpotent:
the $n$-bracket $\{x^{\otimes n}\}$ is in $F^nL$, and the filtered graded
vector space $L$ is complete.

Differential graded (dg) Lie algebras are the special case of
\Linf-algebras in which all brackets vanish except the linear bracket
$\{x_1\}$ and the bilinear bracket $\{x_1,x_2\}$. To recover the usual
definition of a dg Lie algebra, replace $L$ by $\Gg=L[-1]$, with
differential $\delta=(-1)^{\ell_1} \{x_1\}$ (which is a differential since the
curvature vanishes) and Lie bracket
$[x_1,x_2]=(-1)^{\ell_1}\{x_1,x_2\}$. We hope that this shift in grading
conventions between dg Lie algebras and curved \Linf-algebras does not
lead to confusion.

If $\DR$ is a dg commutative algebra and $L$ is a curved
\Linf-algebra, the completed graded tensor product $\DR\ohat L$ is a
curved \Linf-algebra, with brackets
\begin{equation*}
  \{\alpha_1\otimes x_1,\ldots,\alpha_n\otimes x_n\} =
  \begin{cases}
    1 \otimes \{\} , & n=0 , \\[3pt]
    d\alpha_1 \otimes x_1 + (-1)^{|\alpha_1|} \, \alpha_1 \otimes \{x_1\} , & n=1 , \\[3pt]
    (-1)^{\sum_{i<j}|x_i||\alpha_j|} \, \alpha_1\ldots\alpha_n \otimes \{x_1,\ldots,x_n\} , & n>1 ,
  \end{cases}
\end{equation*}
and filtration $F^p(\DR\ohat L) = \DR \ohat F^pL$.

Let $\DR_n$ be the dg commutative algebra
\begin{equation*}
  \DR_n = \F[t_0,\ldots,t_n,dt_0,\ldots,dt_n]/(t_0+\cdots+t_n-1,dt_0+\cdots+dt_n) .
\end{equation*}
As $n$ varies, we obtain a simplicial dg commutative algebra
$\DR_\bullet$. If $\F=\RR$ is the field of real numbers, we may identify
$\DR_n$ with the algebra of polynomial coefficient differential forms
on the convex hull $|\triangle^n|$ of the $n+1$ basis vectors
$\{e_i\mid 0\le i\le n\}$ of $\RR^{0,\ldots,n}$.

\begin{definition}
  The \textbf{nerve} $\MC_\bullet(L)$ of a curved \Linf-algebra is the
  Maurer--Cartan locus of the completed tensor product $\DR_\bullet\ohat L$:
  \begin{equation*}
    \MC_\bullet(L) = \MC(\DR_\bullet\ohat L) .
  \end{equation*}
\end{definition}

The nerve was introduced by Hinich \cite{Hinich}.  In \cite{Linf}, we
show that $\MC_\bullet(L)$ is a Kan complex when $L$ is a nilpotent and has
vanishing curvature, but the proof extends to the current setting
without modification.

Let $h:L\to L[-1]$ be a map of degree $-1$ on the underlying filtered
graded vector space of the curved \Linf-algebra $L$. Consider the
sublocus of the Maurer--Cartan locus satisfying the gauge condition
$hx=0$:
\begin{equation*}
  \MC(L,h) = \{ x\in\MC(L) \mid hx = 0 \} .
\end{equation*}
As in \cite{Linf}, we only consider gauges $h$ that define a
contraction.

The condition $hx=0$ is analogous to the Lorenz gauge
$\mathop{\mathrm{div}} A=0$ in Maxwell's theory of electromagnetism,
where $A$ is a connection 1-form on a complex line bundle. This gauge
is used by Kuranishi \cite{Kuranishi} to study the Kodaira--Spencer
equation (the Maurer--Cartan equation for the Dolbeault resolution
$A^{0,\ast}(X,T)$ of the sheaf of Lie algebras of holomorphic vector
fields on a complex manifold $X$).

In \cite{Linf}, we introduced the gauge condition corresponding to
Dupont's homotopy $s_\bullet$ on $\DR_\bullet$. We now recall the definition of
$s_\bullet$.

The vector field
\begin{equation*}
  E_i = \sum_{j=0}^n (t_j-\delta_{ij}) \partial_j
\end{equation*}
on $|\triangle^n|$ generates the dilation flow $\phi_i(u)$ centered at the
$i$th vertex of $|\triangle^n|$. Let $\epsilon^i_n:\DR_n\to\F$ be evaluation at
$e_i$. The Poincar\'e homotopy
\begin{equation*}
  h^i_n = \int_0^1 \phi_i(u) \, \iota(E_i) \, \frac{du}{u}
\end{equation*}
is a chain homotopy between the identity and $\epsilon^i_n$:
\begin{equation*}
  dh^i_n + h^i_nd = 1 - \epsilon^i_n .
\end{equation*}

Whitney's complex of elementary differential forms is the subcomplex
$W_n\subset\DR_n$ with basis
\begin{equation*}
  \omega_{i_0\ldots i_k} = k! \, \sum_{j=0}^k (-1)^i t_{i_j} dt_{i_0} \ldots
  \widehat{dt}{}_{i_j} \ldots dt_{i_k} , \quad 0\le i_0<\ldots<i_k\le n .
\end{equation*}
It is naturally isomorphic to the complex $N^*(\triangle^n,\F)$ of normalized
simplicial cochains on the $n$-simplex. The operator
\begin{equation*}
  p_n = \sum_{k=0}^n (-1)^k \sum_{i_0<\ldots<i_k} \omega_{i_0\ldots i_k} \epsilon_n^{i_k}
  h_n^{i_{k-1}} \ldots h_n^{i_0}
\end{equation*}
is a projection $p_n$ onto the subcomplex $W_n\subset\DR_n$.

Dupont \cite{Dupont} constructs a simplicial homotopy
\begin{equation*}
  s_n = \sum_{k=0}^{n-1} \sum_{0\le i_0<\ldots<i_k\le n} \omega_{i_0\ldots i_k} h_n^{i_k} \ldots h_n^{i_0}
\end{equation*}
satisfying
\begin{equation*}
  ds_n + s_n d = 1 - p_n .
\end{equation*}

\begin{definition}
  The simplicial subcomplex $\gamma_\bullet(L)\subset\MC_\bullet(L)$ is the simplicial subset
  of Maurer--Cartan elements on which $s_\bullet$ vanishes:
  \begin{equation*}
    \gamma_\bullet(L)  = \MC\bigl( \DR_\bullet\ohat L,s_\bullet \bigr) .
  \end{equation*}
\end{definition}

We now describe the functoriality of $\MC_\bullet(L)$ and $\gamma_\bullet(L)$.
\begin{definition}
  A \textbf{morphism} $f:L\to M$ of curved \Linf-algebras is a sequence
  of filtered graded symmetric maps
  \begin{equation*}
    f = f_{(k)} : F^{p_1}L^{\ell_1} \times \ldots \times F^{p_k}L^{\ell_k} \to
    F^{p_1+\ldots+p_k}M^{\ell_1+\ldots+\ell_k} , \quad k\ge0 ,
  \end{equation*}
  such that for all $n\ge0$,
  \begin{multline*}
    \sum_{\pi\in S_n} \sum_{k=0}^\infty \frac{(-1)^\epsilon}{k!} \sum_{n_1+\cdots+n_k=n}
    \frac{1}{n_1!\ldots n_k!} \,
    \{f_{(n_1)}(x_{\pi(1)},\ldots),\ldots,f_{(n_k)}(\ldots,x_{\pi(n)})\} \\
    = \sum_{\pi\in S_n} \sum_{k=0}^n \frac{(-1)^\epsilon}{k!(n-k)!} \,
    f(\{x_{\pi(1)},\ldots,x_{\pi(k)}\},x_{\pi(k+1)},\ldots,x_{\pi(n)}) .
  \end{multline*}

  The composition $g\bullet f$ of morphisms $f:L\to M$ and $g:M\to N$ is
  \begin{multline*}
    (g\bullet f)(x_1,\ldots,x_n) = \sum_{\pi\in S_n} \sum_{k=0}^\infty
    \frac{(-1)^\epsilon}{k!} \sum_{n_1+\cdots n_k} \frac{1}{n_1!\ldots n_k} \\
    g_{(k)}\bigl( f_{(n_1)}(x_{\pi(1)},\ldots),\ldots,f_{(n_k)}(\ldots,x_{\pi(n_k)}) \bigr) .
  \end{multline*}
\end{definition}

A morphism $f:L\to M$ is \textbf{strict} if $f_{(k)}=0$, $k\ne1$. Curved
\Linf-algebras form a category $\LL$; denote the subcategory of strict
morphisms by $\L$.

The set of points of an object $X$ in a category is the set of
morphisms from the terminal object of the category to $X$. The
terminal object in the category $\LL$ is the curved \Linf-algebra $0$,
and the set of points $\Hom(0,L)$ of a curved \Linf-algebra $L$ is the
Maurer-Cartan set $\MC(L)$. This shows that $\MC(L)$ is a left-exact
functor from the category $\LL$ of curved \Linf-algebras to the
category of sets. The action of a morphism $f:L\to M$ on a Maurer-Cartan
element $x\in\MC(L)$ is given by the formula
\begin{equation*}
  f(x) = \sum_{k=0}^\infty \frac{1}{k!} \, f_{(k)}(x,\ldots,x) .
\end{equation*}

In this article, following \cite{Linf}, we work with $\gamma_\bullet$ as a
functor on the category $\L$ of \Linf-algebras with strict morphisms.
Robert-Nicoud and Vallette \cite{RV} have shown that $\gamma_\bullet$ extends to
a larger category $\LL_\pi$ with the same objects as $\L$. The inclusion
$\L\subset\LL$ factors through the inclusion $\L\subset\LL_\pi$, though the natural
functor from $\LL_\pi$ to $\LL$ is neither faithful nor full.

The space $\C(L)$ of Chevalley-Eilenberg chains of a curved
\Linf-algebra $L$ is the filtered coalgebra
\begin{equation*}
  \C(L) = \prod_{k=0}^\infty \bigl( L^{\ohat k} \bigr)_{S_k} .
\end{equation*}
(Taking the product over $k$ instead of the sum is equivalent to
taking the completion, by the hypothesis that $L$ is pro-nilpotent.)
It is a filtered dg cocommutative coalgebra, with coproduct
\begin{equation*}
  \nabla \bigl( x_1\otimes\ldots\otimes x_k \bigr) = \sum_{\pi\in S_k}
  \sum_{j=0}^k \frac{(-1)^\epsilon}{j!(k-j)!} \, \bigl( x_{\pi(1)} \otimes\ldots\otimes x_{\pi(j)}
  \bigr) \otimes \bigl( x_{\pi(j+1)} \otimes \cdots \otimes x_{\pi(k)} \bigr)
\end{equation*}
and differential
\begin{equation*}
  \delta(x_1\otimes\ldots\otimes x_k) = \sum_{\pi\in S_k} \sum_{j=0}^k \frac{(-1)^\epsilon}{j!(k-j)!} \,
  \{x_{\pi(1)},\ldots,x_{\pi(j)}\} \otimes x_{\pi(j+1)}\otimes\cdots\otimes x_{\pi(k)} .
\end{equation*}
The coproduct
\begin{equation*}
  \nabla: F^p\C(L) \to \bigoplus_{q=0}^pF^q\C(L) \ohat F^{p-q}\C(L)
\end{equation*}
and codifferential $\delta:F^p\C(L)\to F^p\C(L)$ have filtration degree $0$.

A morphism $f:L\to M$ of \Linf-algebras induces a morphism of filtered
dg cocommutative coalgebras $\C(f):\C(L)\to \C(M)$, by the formula
\begin{equation*}
  \C(f) ( x_1 \otimes\cdots\otimes x_n )
  = \sum_{\pi\in S_n} \sum_{k=0}^\infty \frac{1}{k!} \sum_{n_1+\cdots+n_k=n}
  \frac{(-1)^\epsilon}{n_1!\ldots n_k!}
  f_{(n_1)}(x_{\pi(1)},\ldots) \otimes \cdots \otimes f_{(n_k)}(\ldots,x_{\pi(n_k)}) .
\end{equation*}
The functor $\C(L)$ embeds the category $\LL$ of \Linf-algebras as a
full subcategory of the category of filtered dg cocommutative
coalgebras.

Berglund applies homological perturbation theory to the dg coalgebra
$\C(L)$ to obtain a homotopical perturbation theory for \Linf-algebras
\cite{Berglund}. In this paper, we apply a curved extension of
Berglund's theorem to prove the following.
\begin{theorem}
  \label{main}
  The natural transformation $\gamma_\bullet(L)\to\MC_\bullet(L)$ has a natural
  retraction
  \begin{equation*}
    \rho:\MC_\bullet(L)\to\gamma_\bullet(L) .
  \end{equation*}
\end{theorem}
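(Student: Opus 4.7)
The plan is to apply a curved version of Berglund's homotopy transfer theorem (the ``curved extension of Berglund's theorem'' announced just before the statement) to the pro-nilpotent curved \Linf-algebra $\DR_n\ohat L$, using the contraction inherited from Dupont's operators. The tensor products $i_n\otimes 1_L$, $p_n\otimes 1_L$, and $s_n\otimes 1_L$ give a contraction of $\DR_n\ohat L$ onto $W_n\ohat L$ at the level of underlying filtered graded vector spaces, the Koszul-sign cancellation ensuring that $s_n\otimes 1_L$ is a chain homotopy for the full differential assembled from $d$ on $\DR_n$ and $\{-\}$ on $L$. The theorem produces a transferred curved \Linf-structure on $W_n\ohat L$, together with \Linf-morphisms $i_\infty\colon W_n\ohat L\to\DR_n\ohat L$ and $p_\infty\colon \DR_n\ohat L\to W_n\ohat L$ whose linear parts are the inclusion and $p_n\otimes 1_L$; the tree formulas defining them converge by the pro-nilpotence of $L$.

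Define the candidate retraction simplex-wise by $\rho_n(x) = i_\infty(p_\infty(x))$. Two properties then have to be verified. First, $\rho_n(x)\in\gamma_n(L)$: every summand of $i_\infty(y)$ past the leading term $i_n(y)\in W_n\ohat L$ has the shape $(s_n\otimes 1_L)(\cdots)$, and Dupont's side conditions $s_n^2=0$ and $s_ni_n=0$ (the latter inherited from $s_np_n=0$ together with $W_n=\mathrm{im}\,p_n$) then force $s_n\rho_n(x)=0$. Second, $\rho_n$ is the identity on $\gamma_n(L)$: the HPT side identity $p_\infty\circ i_\infty = 1_{W_n\ohat L}$, inherited from $p_ni_n=1_{W_n}$, gives $\rho_n\circ i_\infty = i_\infty$, so it suffices to prove the inclusion $\gamma_n(L)\subseteq \mathrm{im}\,i_\infty$. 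In the linear case this reduces to the elementary identity $\MC(\DR_n\ohat L)\cap\ker s_n = (W_n\ohat L)\cap\ker d$, obtained by applying the contraction decomposition $1 = i_np_n + ds_n + s_nd$ to an element with $s_nx = 0$ and $dx = 0$. In the general pro-nilpotent case one argues by induction along the filtration on $L$, the higher components of $i_\infty$ exactly absorbing the nonlinear corrections enforced by the Maurer--Cartan equation.

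Naturality in strict \Linf-morphisms $L\to M$ follows from the functoriality of the tree formulas for $i_\infty$ and $p_\infty$, while the compatibility of $\rho$ with the simplicial face and degeneracy maps of $\DR_\bullet$ reduces to the simpliciality of Dupont's operators $(p_\bullet, s_\bullet)$. The main technical obstacle is the formulation and proof of the curved Berglund theorem with side conditions sharp enough to yield both $s_n\circ i_\infty = 0$ (past the linear component) and $p_\infty\circ i_\infty = 1_{W_n\ohat L}$; a secondary subtlety is that, even for strict $L$, the morphism $i_\infty$ and the transferred structure on $W_n\ohat L$ are most naturally objects of the Robert-Nicoud--Vallette category $\LL_\pi$, so naturality for strict morphisms must ultimately be verified at that level.
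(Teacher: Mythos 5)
Your strategy is the paper's: transfer the curved \Linf-structure along Dupont's contraction $(i_n,p_n,s_n)$ via a curved homotopy transfer theorem, identify $\gamma_\bullet(L)$ with the Maurer--Cartan set of the transferred structure on $W_\bullet\otimes L$ by means of $i_\infty$, and obtain $\rho$ from $p_\infty$. The difference is one of packaging, and it matters at exactly one point. The paper runs the entire argument on the Chevalley--Eilenberg coalgebra $\C(\DR_n\ohat L)$, using the \emph{symmetrized} tensor-trick homotopy $\h$ and the exponential $\exp(x)=\sum_n x^{\otimes n}/n!$: the gauge condition $s_nx=0$ becomes $\h\exp(x)=0$, and both of your required properties are one-line consequences of the perturbed contraction identity $\i_\mu\p_\mu+\D_\mu\h_\mu+\h_\mu\D_\mu=1$ together with the side conditions $\h^2=\p\h=\h\i=0$ (Lemma~\ref{fh} of the paper; this symmetrization is exactly what resolves what you call the ``main technical obstacle''). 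In particular, the step you reduce to the inclusion $\gamma_n(L)\subseteq\mathrm{im}\,i_\infty$ and then only sketch (``induction along the filtration\dots absorbing the nonlinear corrections'') is in the paper the computation $\i_\mu\p_\mu\exp(x)=(1-\D_\mu\h_\mu-\h_\mu\D_\mu)\exp(x)=\exp(x)$ for $x$ Maurer--Cartan with $hx=0$; your induction can be completed as a Kuranishi-type uniqueness argument for gauge-fixed Maurer--Cartan elements, but as written this is the one genuine gap, and it is the heart of the retraction property rather than a routine verification. Finally, the worry about $\LL_\pi$ is unnecessary: $i_\mu$ and $p_\mu$ are honest weak \Linf-morphisms, i.e.\ morphisms in $\LL$, and naturality in strict $f:L\to M$ holds simply because $1\otimes f$ commutes with the Dupont operators and hence with every perturbation formula; $\LL_\pi$ enters the paper only when extending the functoriality of $\gamma_\bullet$ beyond strict morphisms.
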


The morphism $\rho:\MC_\bullet(L)\to\gamma_\bullet(L)$ is an analogue of holonomy for curved
\Linf-algebras. Its construction is explicit, and formulas for $\rho$
could in principle be extracted from the proof. Due to the complexity
of the Dupont homotopy, these formulas are very difficult to work
with: this is the reason that we introduce cubical analogues of the
functors $\MC_\bullet(L)$ and $\gamma_\bullet(L)$ in a sequel to this paper
\cite{cubical}. The analogue of the Dupont homotopy on the $n$-cube
has $n$ terms, while the Dupont homotopy on the $n$-simplex has
$2^{n+1}-2$ terms. The following is the main result of \cite{cubical}.
\begin{theorem}
  There is a natural equivalence of functors $\gamma^\square_\bullet(L)\cong\gamma_\bullet(L)$.
\end{theorem}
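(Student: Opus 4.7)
The plan is to apply a curved extension of Berglund's homotopical perturbation theorem to the Dupont contraction of $\DR_n \ohat L$ onto the subcomplex $W_n \ohat L$ of elementary forms. The Dupont data $(i_n, p_n, s_n)$ satisfies $p_n i_n = 1$ and $i_n p_n - 1 = -(ds_n + s_n d)$, together with the side conditions $s_n i_n = 0$, $p_n s_n = 0$, $s_n^2 = 0$. Tensoring with $L$ gives a contraction of $\DR_n\ohat L$ (equipped with its linear differential) onto $W_n\ohat L$; the higher brackets of $L$ together with the curvature $\{\}\in L^1$ play the role of a perturbation of that differential, and the tree sums of HPT converge because $L$ is pro-nilpotent (the $k$-bracket raises filtration by $k$, and the curvature already lies in $F^1L$).

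The output of HPT is a curved \Linf-structure on $W_n\ohat L$, together with \Linf-morphisms $I_n\colon W_n\ohat L\to\DR_n\ohat L$ and $P_n\colon \DR_n\ohat L\to W_n\ohat L$ satisfying $P_n\bullet I_n=\mathrm{id}$, plus a perturbed homotopy $S_n$ inheriting $S_n^2=0$, $S_nI_n=0$, $P_nS_n=0$. The crucial feature is the side condition $s_n\circ I_n=0$ componentwise: the component $I_{n,1}=i_n$ satisfies $s_ni_n=0$ by hypothesis, while each $I_{n,k}$ for $k\ge 2$ is built inductively with an $s_n$ at the root, so $s_n\circ I_{n,k}=s_n^2\circ(\cdots)=0$. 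Consequently, for any $y\in\MC(W_n\ohat L)$ the image $I_n(y)=\sum_k\frac{1}{k!}I_{n,k}(y^{\otimes k})$ lies in $\gamma_n(L)$.

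Define $\rho_n(x)=I_n(P_n(x))$ at the level of MC loci, using the standard action of \Linf-morphisms on Maurer--Cartan sets. By the preceding observation, $\rho_n$ lands in $\gamma_n(L)$. To verify that $\rho_n$ restricts to the identity on $\gamma_n(L)$, I would show that $I_n$ and $P_n$ restrict to mutually inverse bijections between $\MC(W_n\ohat L)$ and $\gamma_n(L)$: injectivity of $I_n$ follows from $P_n\bullet I_n=\mathrm{id}$, and surjectivity comes from a filtration-degree induction solving $I_n(y)=x$ for given $x\in\gamma_n(L)$, using that the perturbed homotopy $S_n=s_n+s_nDs_n+\cdots$ vanishes on $\ker(s_n)$. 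Naturality of $\rho$ in strict morphisms $L\to L'$ is then automatic from the functoriality of HPT in the perturbation data, and simpliciality in $n$ follows from the compatibility of the Dupont contraction with the face and degeneracy operators of $\DR_\bullet$.

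The main obstacle is formulating and proving the curved version of Berglund's theorem itself: the curvature contributes a nonzero $0$-ary term at degree $1$ to the perturbation, which is outside the scope of the standard uncurved HPT. One must verify that the perturbed tree sums still converge in the complete filtered setting and that the side conditions $s_n\circ I_n=0$ and $S_n|_{\ker(s_n)}=0$ survive the perturbation; the pro-nilpotence hypothesis is what makes this possible.
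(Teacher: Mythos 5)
There is a fundamental mismatch here: the statement you were asked to prove is the comparison theorem $\gamma^\square_\bullet(L)\cong\gamma_\bullet(L)$ between the \emph{cubical} gauge-fixed nerve and the simplicial one, but your proposal never mentions the cubical functor $\gamma^\square_\bullet(L)$ at all. What you have actually sketched is a proof of the paper's Theorem~\ref{main} --- the construction of the retraction $\rho:\MC_\bullet(L)\to\gamma_\bullet(L)$ via a curved extension of Berglund's homotopical perturbation theorem applied to the Dupont contraction $(i_n,p_n,s_n)$. As an outline of \emph{that} result your sketch is essentially the paper's own strategy (compare Theorem~\ref{Berglund} and the proposition showing that $\MC(p_\mu)$ restricts to a bijection from $\MC(L,h)$ to $\MC(\breve L)$, applied with $h=s_\bullet$), and your worry about the curved $0$-ary term of the perturbation is legitimate and is handled in the paper by taking the differential $\D$ as extra data in the definition of a contraction. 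But none of this addresses the statement in question.

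To prove the stated theorem you would need, at minimum: a definition of the cubical de Rham algebra and the cubical nerve $\MC^\square_\bullet(L)$; the cubical analogue of the Dupont homotopy (which the paper notes has only $n$ terms on the $n$-cube, versus $2^{n+1}-2$ on the $n$-simplex) and hence the definition of $\gamma^\square_\bullet(L)$; and then an actual comparison map between the two gauge-fixed nerves, together with a proof that it is a natural weak equivalence --- presumably by relating the two contractions under a triangulation of the cube or a cubical--simplicial adjunction and checking that the two gauge conditions are carried into one another. None of these ingredients appears in your proposal. Note also that this theorem is stated in the present paper only as the main result of the sequel \cite{cubical} and is not proved here, so even a correct HPT argument for the simplicial retraction cannot be graded as a proof of this statement.
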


Kapranov \cite{Kapranov} considered the following class of curved
\Linf-algebras in the setting of dg Lie algebras.
\begin{definition}
  A curved \Linf-algebra $L$ is \textbf{semiabelian} if $L^{\ge-n}$ is a
  curved \Linf-subalgebra of $L$ for $n>0$.
\end{definition}

Every dg Lie algebra concentrated in degrees $[-1,\infty)$ is semiabelian.
In \cite{cubical}, we identify $\rho$ for $L$ semiabelian with the higher
holonomy of Kapranov \cite{Kapranov} and Bressler et al.\ \cite{BGNT}.

If $L$ is a nilpotent Lie algebra $\Gg$, the $n$-simplices of
$\MC_\bullet(\Gg)$ are the flat $\Gg$-connections over the
$n$-simplex. The simplicial set $\gamma_\bullet(\Gg)$ is naturally equivalent to
the nerve of the pro-nilpotent Lie group $\G(\Gg)$ associated to
$\Gg$, and the function $\rho:\MC_1(\Gg)\to\gamma_1(\Gg)$ is the path-ordered
exponential.

Let $C\Gg$ be the cone of a nilpotent Lie algebra $\Gg$; this is the
dg Lie algebra $0\to\Gg\to\Gg\to0$, equaling $\Gg$ in degrees $0$ and
$-1$, with differential the identity map. An element of $\MC_n(C\Gg)$
is a $\Gg$-connection on the $n$-simplex, without any condition on the
curvature. Since $C\Gg$ is semiabelian, we obtain an identification of
the holonomy $\rho:\MC_2(C\Gg)\to\gamma_2(C\Gg)$ on 2-simplices with the higher
holonomy of a $\Gg$-connection over the 2-simplex.

\section*{Categories of fibrant objects}

A \textbf{category with weak equivalences} $(\CV,\CW)$ is a category
$\CV$, together with a subcategory $\CW\subset\CV$ satisfying the following
axioms.
\begin{enumerate}[label=(W\arabic*),labelwidth=\widthof{(W2)},leftmargin=!]
\item Every isomorphism is a weak equivalence.
\item If $f$ and $g$ are composable morphisms such that $gf$ is a weak
  equivalence, then if either $f$ or $g$ is a weak equivalence, then
  both $f$ and $g$ are weak equivalences.
\end{enumerate}
If $\CV$ is small, the pair $(\CV,\CW)$ has a simplicial localization
$L(\CV,\CW)$. This is a simplicial category with the same objects as
$\CV$ that refines the usual localization $\Ho(\CV)=\CW^{-1}\CV$, in
the sense that the morphisms of the localization are the components of
the simplicial sets of morphisms of $L(\CV,\CW)$.

Categories of fibrant objects, introduced by Brown \cite{Brown} in his
work on simplicial spectra, are a convenient setting in which to study
the simplicial localization; in a category of fibrant objects, the
simplicial set of morphisms in the simplicial localization between two
objects is the nerve of a category of spans.

\begin{definition}
  A \textbf{category of fibrant objects} $(\CV,\CW,\CF)$ is a category
  with weak equivalences $(\CV,\CW)$, together with a subcategory
  $\CF\subset\CV$ of fibrations, satisfying the following axioms. We refer
  to morphisms $f\in\CF\cap\CW$ which are both fibrations and weak
  equivalences as \textbf{trivial fibrations}.
  \begin{enumerate}[label=(F\arabic*),labelwidth=\widthof{(F5)},
    leftmargin=!]
  \item Every isomorphism is a fibration.
  \item Pullbacks of fibrations exist, and are fibrations.
  \item There exists a terminal object $e$ in $\CV$, and any morphism
    with target $e$ is a fibration.
  \item Pullbacks of trivial fibrations are trivial fibrations.
  \item Every morphism $f:X\to Y$ has a factorization
    \begin{equation*}
      \hspace{-\leftmargin}
      \begin{tikzcd}[row sep=0.5em]
        & P \ar[rd,"q"] \\
        X \ar[ru,"s"] \ar[rr,"f"'] & & Y
      \end{tikzcd}
    \end{equation*}
    where $s$ is a weak equivalence and $q$ is a fibration.
  \end{enumerate}
\end{definition}

It follows from the axioms that $\CV$ has finite products. Let $Y$ be
an object of $\CV$. The diagonal $Y\to Y\times Y$ has a factorization into a
weak equivalence followed by a fibration:
\begin{equation*}
  \begin{tikzcd}[row sep=0.5em]
    & PY \ar[dr,"\partial_0\times\partial_1"] & \\
    Y \ar[ur,"s"] \ar[rr] & & Y\times Y
  \end{tikzcd}
\end{equation*}
The object $PY$ is called a \textbf{path space} of $Y$. The proof of
the following lemma shows that the existence of a path space for every
object of $\CV$ is equivalent to Axiom~(F5).
\begin{lemma}[Brown's lemma]
  The weak equivalences of a category of fibrant objects are
  determined by the trivial fibrations: a morphism $f$ is a weak
  equivalence if and only if it factorizes as a composition $qs$,
  where $q$ is a trivial fibration and $s$ is a section of a trivial
  fibration.
\end{lemma}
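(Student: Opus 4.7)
The plan is to handle both directions separately. The ``if'' direction is immediate from axiom (W2): if $f=qs$ with $q$ a trivial fibration and $s$ a section of some trivial fibration $p$, then $ps=\mathrm{id}$ and $p$ are weak equivalences, so (W2) forces $s$ to be a weak equivalence, and then $f=qs$ is a composition of weak equivalences. The interesting direction is the converse, which requires building a suitable factorization of an arbitrary weak equivalence $f:X\to Y$.

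For this construction I first observe that the endpoint maps $\partial_0,\partial_1:PY\to Y$ of a path space are themselves trivial fibrations: each is the composition of the fibration $(\partial_0,\partial_1):PY\to Y\times Y$ with a projection $Y\times Y\to Y$ (which is a pullback of $Y\to e$ and hence a fibration by (F2) and (F3)), and each is a weak equivalence by applying (W2) to $\partial_i s=\mathrm{id}_Y$. Following a standard mapping-path-space construction, I then define $P_f$ as the pullback
\begin{equation*}
  \begin{tikzcd}
    P_f \ar[r,"q_2"] \ar[d,"q_1"'] & PY \ar[d,"\partial_0"] \\
    X \ar[r,"f"'] & Y
  \end{tikzcd}
\end{equation*}
so that $q_1$ is a trivial fibration by (F4). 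The universal property supplies a section $s_f=(\mathrm{id}_X,sf):X\to P_f$ of $q_1$, and setting $q=\partial_1 q_2$ gives $qs_f=\partial_1 sf=f$.

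The main obstacle is to prove that $q$ is a trivial fibration. The weak-equivalence part is a double application of (W2): $q_1 s_f=\mathrm{id}_X$ together with $q_1$ a weak equivalence forces $s_f$ to be one, and then $f=qs_f$ with $f$ and $s_f$ both weak equivalences forces $q$ to be one. The harder point is that $q$ is a fibration; for this I re-present $P_f$ as the pullback of the fibration $(\partial_0,\partial_1):PY\to Y\times Y$ along $f\times\mathrm{id}_Y:X\times Y\to Y\times Y$, so that $(q_1,q):P_f\to X\times Y$ is itself a fibration by (F2). Composing with the projection $X\times Y\to Y$, which is a fibration by (F2) and (F3), shows that $q$ is a fibration and completes the desired factorization.
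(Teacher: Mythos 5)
Your proof is correct and is precisely the standard mapping-path-space argument that the paper alludes to (it states Brown's lemma without proof, immediately after introducing the path object $PY$ whose role is exactly to enable your factorization $f=(\partial_1 q_2)\circ(\mathrm{id}_X,sf)$). All the key points are in place: the easy direction via (W1) and (W2), the identification of $\partial_0,\partial_1$ as trivial fibrations, the use of (F4) to make $q_1$ a trivial fibration, and the re-presentation of $P_f$ as a pullback along $f\times\mathrm{id}_Y$ to see that $q$ is a fibration.
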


A functor between categories of fibrant objects is \textbf{exact} if
it preserves fibrations, trivial fibrations, the terminal object, and
pullbacks along fibrations. By Brown's lemma, exact functors also
preserve weak equivalences.

The simplicial set $\Lambda^n_i$ is the union of all faces
$\partial_j\triangle^n$ except the $i$th of the $n$-simplex
$\triangle^n$. A simplicial set $X_\bullet$ is fibrant (or a Kan complex) if the map
\begin{equation*}
  X_n \to \Hom(\Lambda^n_i,X)
\end{equation*}
is surjective for all $0<i\le n$. A \textbf{fibration} of fibrant
simplicial sets is a simplicial morphism $f:X\to Y$ such that the map
\begin{equation*}
  X_n \to \Hom(\Lambda^n_i,X) \times_{\Hom(\Lambda^n_i,Y)} Y_n
\end{equation*}
is surjective for all $i>0$. (The omission of $i=0$ when $n>0$ is
sanctioned by a theorem of Joyal \cite{Joyal}*{Corollary 4.16}.) The
trivial fibrations are the simplicial morphisms $f:X\to Y$ such that the
map
\begin{equation*}
  X_n \to \Hom(\partial\triangle^n,X) \times_{\Hom(\partial\triangle^n,Y)} Y_n
\end{equation*}
is surjective for all $n\ge0$. The full subcategory of fibrant
simplicial sets is a category of fibrant objects $\S$, with functorial
path object $PX$:
\begin{equation*}
  PX_n = \Hom(\triangle^n\times\triangle^1,X) .
\end{equation*}
By the simplicial approximation theorem, a simplicial morphism
$f:X \to Y$ is a weak equivalence if and only if the geometric
realization $|f|:|X|\to|Y|$ is a homotopy equivalence of topological
spaces (Curtis \cite{Curtis}). A skeleton of the subcategory of
fibrant simplicial sets of cardinality less than a fixed infinite
cardinal $\aleph$ is a small category of fibrant objects.

We now show that the category $\LL$ of curved \Linf-algebras is a
category of fibrant objects. If $L$ is a curved \Linf-algebra, $\gr L$
is naturally a filtered complex, with differential
\begin{equation*}
  \delta x=\{x\} \pmod{F^{p+1}L}
\end{equation*}
for $x\in F^pL$.

Denote by $L_\sharp$ the underlying filtered graded vector space of a
curved \Linf-algebra. Denote the linear component $f_{(1)}$ of a
morphism $f:L\to M$ of curved \Linf-algebras by
$\d f:L_\sharp\to M_\sharp$, and by $\gr\d f:\gr L\to\gr M$ the induced morphism of
complexes.
\begin{definition}
  A morphism $f:L\to M$ of curved \Linf-algebras is a \textbf{weak
    equivalence} if
  \begin{equation*}
    \gr \d f:\gr L \to \gr M
  \end{equation*}
  is a quasi-isomorphism.
\end{definition}

The weak equivalences form a subcategory $\CW$ of $\LL$, making it
into a category with weak equivalences; likewise, the category
$\CW\cap\L$ of strict weak equivalences makes $\L$ into a category with
weak equivalences. Note that retracts of weak equivalences are weak
equivalences in both of these categories.

\begin{definition}
  A morphism $f:L \to M$ of curved \Linf-algebras is a
  \textbf{fibration} if $\d f$ is surjective.
\end{definition}

A fibration $f$ is a trivial fibration if and only if the complex
$(\gr K,\delta)$ is contractible, where $K$ is the kernel of $\d f$. Every
isomorphism is a trivial fibration. Note that retracts of fibrations
are fibrations, and that retracts of trivial fibrations are trivial
fibrations.

\begin{lemma}
  \label{fibration}
  A morphism $f:L \to M$ of curved \Linf-algebras is a fibration if and
  only if $\d f$ has a section, that is, a morphism
  $s:M_\sharp \to L_\sharp$ of filtered graded vector spaces such that
  $\d f\circ s$ is the identity of $M_\sharp$.
\end{lemma}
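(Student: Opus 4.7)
My approach is to prove the two implications separately: the ``section implies fibration'' direction is essentially definitional, while ``fibration implies section'' requires an explicit construction by splitting the filtration.

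For the easy direction, suppose we have a filtered graded section $s \colon M_\sharp \to L_\sharp$ of $\d f$. Given $m \in F^p M$, filteredness of $s$ yields $s(m) \in F^p L$, and $\d f(s(m)) = m$ shows $m \in \d f(F^p L)$. So $\d f$ is surjective at every filtration level, i.e., a fibration.

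For the harder direction, I would build $s$ by a level-wise splitting and assemble using completeness. Since $\F$ is a field, I can choose for each $p$ a graded complement $\bar M_p \subset F^p M$ to $F^{p+1}M$, so that $F^p M = \bar M_p \oplus F^{p+1}M$. Completeness of the filtration on $M$ then guarantees that every $m \in M$ has a unique convergent decomposition $m = \sum_{p\ge 0} m_p$ with $m_p \in \bar M_p$, and moreover $m \in F^q M$ if and only if $m_p = 0$ for all $p < q$. By hypothesis, $\d f$ maps $F^p L$ onto $F^p M$, so I can pick a graded linear section $s_p \colon \bar M_p \to F^p L$ of $\d f$ for each $p$. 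Define
\begin{equation*}
  s(m) = \sum_{p\ge 0} s_p(m_p) .
\end{equation*}
Since $s_p(m_p) \in F^p L$, the series converges in the complete filtration of $L$. By construction $s$ is linear and grading-preserving; if $m \in F^q M$ then $s(m) = \sum_{p\ge q} s_p(m_p) \in F^q L$, so $s$ is filtered; and $\d f \circ s(m) = \sum_p \d f(s_p(m_p)) = \sum_p m_p = m$, so $s$ is a section.

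The only substantive ingredients are that $\F$ is a field (to produce complements and linear splittings at each level) together with completeness of the filtrations (to glue the level-wise sections into a globally defined filtered section). No step presents a serious obstacle; the argument amounts to the folklore fact that a strict epimorphism of complete filtered graded vector spaces over a field admits a filtered section.
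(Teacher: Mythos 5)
Your proof is correct and takes essentially the same route as the paper: the forward direction is dismissed as immediate, and the reverse direction splits the filtration on $M$ using that $\F$ is a field and assembles a filtered section by completeness, which is exactly the paper's argument phrased as a convergent series over complements $\bar M_p$ of $F^{p+1}M$ in $F^pM$ rather than as an inverse limit of sections $s_p\colon M/F^pM\to L/F^pL$ of the quotient tower. If anything, choosing preimages of the $\bar M_p$ directly (so that $\d f\circ s_p$ is the identity on the nose) is marginally cleaner than the paper's lifting of a section of $\gr\d f$, and like the paper you implicitly read the surjectivity of $\d f$ as surjectivity on each filtration step $F^pL\to F^pM$, which is what the statement requires.
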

\begin{proof}
  It is clear that the first condition implies that $f$ is a
  fibration. To see the reverse implication, first choose a section
  $\gr s:\gr M\to\gr L$ of the morphism $\gr\d f:\gr L\to\gr M$. Next,
  choose isomorphisms
  \begin{equation*}
    L/F^pL \cong \bigoplus_{q<p} \gr^qL \quad\text{and}\quad
    M/F^pM \cong \bigoplus_{q<p} \gr^qM
  \end{equation*}
  that are compatible with the morphisms
  \begin{equation*}
    \alpha_{p,q} : L/F^qL \to L/F^pL \quad\text{and}\quad \beta_{p,q} : M/F^qM \to M/F^pM
  \end{equation*}
  when $p\le q$. In this way, we obtain sections
  $s_p:M/F^pM \to L/F^pL$ such that
  \begin{equation*}
    \alpha_{p,q}s_q = s_p\beta_{p,q} .
  \end{equation*}
  Take the limit of $s_p$ over $p$ to obtain a section $s:M\to L$.
\end{proof}

The following result was proved by Rogers \cite{Rogers} when the
curvatures of $L$, $M$ and $N$ vanish.
\begin{lemma}
  \label{pullback}
  If $f$ is a fibration, the fibered product $L\times_MN$ of \Linf-algebras
  \begin{equation*}
    \begin{tikzcd}
      L \times_M N \arrow[r,dashrightarrow,"G"] \pullback
      \arrow[d,dashrightarrow,"F"'] & L \dar{f} \\
      N \arrow[r,"g"'] & M
    \end{tikzcd}
  \end{equation*}
  exists. The pullback $F$ of the fibration $f$ may be taken to be a
  strict fibration.
\end{lemma}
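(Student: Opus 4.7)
My plan is to construct the pullback explicitly, using a section of $\d f$ to give a direct model for the underlying filtered graded vector space.

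First, invoke Lemma~\ref{fibration} to choose a section $s:M_\sharp\to L_\sharp$ of $\d f$, and let $K=\ker(\d f)\subset L_\sharp$, so that $L_\sharp\cong K\oplus s(M_\sharp)$ as filtered graded vector spaces. Take as the underlying filtered graded vector space of the pullback $P_\sharp=K\oplus N_\sharp$, with $F^pP_\sharp=F^pK\oplus F^pN_\sharp$. Define $F:P\to N$ to be the strict morphism whose linear component is the projection $K\oplus N_\sharp\to N_\sharp$; this will be the strict fibration promised by the statement, and it is a fibration because its linear part is surjective. Define the linear component of $G:P\to L$ by $G_{(1)}(k,n)=k+s(g_{(1)}(n))$.

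Second, define the curved \Linf-brackets on $P$ and the higher components $G_{(r)}$ simultaneously, by induction on $r$. Two conditions force the data: strictness of $F$ requires that the $N_\sharp$-component of $\{(k_1,n_1),\ldots,(k_r,n_r)\}_P$ agrees with $\{n_1,\ldots,n_r\}_N$; and the requirement that $G$ be a morphism of curved \Linf-algebras determines, order by order in $r$, both the $K$-component of the $r$-bracket on $P$ and the value of $G_{(r)}$, since applying $\d f$ to the morphism equation kills the $K$-part and is solved by composing with $s$. Concretely, at each stage one separates the morphism equation for $G$ into its $K$- and $s(M_\sharp)$-components; the $s(M_\sharp)$-component is already satisfied because $f\bullet G$ and $g\bullet F$ must agree, while the $K$-component defines the new data.

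Third, verify the curved \Linf-axioms on $P$ and the \Linf-morphism axiom for $G$. These reduce, after a bookkeeping argument, to the corresponding axioms for $L$, $M$, $N$, $f$, and $g$, exploiting the identity $\d f\circ s=\operatorname{id}_{M_\sharp}$ to decouple the $K$- and $N_\sharp$-components. Finally, verify the universal property: given morphisms $a:Q\to L$ and $b:Q\to N$ with $f\bullet a=g\bullet b$, strictness of $F$ shows that the $N_\sharp$-component of $h:Q\to P$ must be $b$, and the morphism equation $G\bullet h=a$, combined with $\d f(a_{(1)}-sg_{(1)}b_{(1)})$ landing in $K$, determines the $K$-component of each $h_{(r)}$ uniquely by induction on $r$.

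The main obstacle is the inductive definition in the second step, together with the verification in the third: while each step is formal, showing that the required data solving the morphism equation for $G$ can be chosen consistently with the (co)Jacobi identities for $P$ is the substantive calculation, and it is here that the surjectivity of $\d f$—and hence the existence of $s$—is used essentially. The clause that $F$ may be taken to be a strict fibration is a by-product of the construction.
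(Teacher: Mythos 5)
Your overall strategy --- realizing the pullback on $K\oplus N_\sharp$ with $F$ the strict projection, normalizing the $K$-components of $G$, and transporting the brackets through $F$ and $G$ --- is the same as the paper's. But there is a genuine gap in your treatment of the curvature, which is precisely what makes the curved case delicate. A morphism of curved \Linf-algebras has an arity-zero component, and commutativity of the square at arity zero forces $f(G_{(0)})=g_{(0)}$ with $g_{(0)}\ne0$ in general; your induction starts at $r=1$ and never produces $G_{(0)}$ or the curvature of $L\times_MN$. Worse, once $G_{(0)}\ne0$ the arity-one component of $f\bullet G=g\bullet F$ reads
\begin{equation*}
  f_{(1)}\bigl(G_{(1)}(\zeta)\bigr)+\sum_{k\ge1}\frac{1}{k!}\,
  f_{(k+1)}\bigl(G_{(1)}(\zeta),G_{(0)},\ldots,G_{(0)}\bigr)=g_{(1)}(F\zeta) ,
\end{equation*}
so your closed formula $G_{(1)}(k,n)=k+s(g_{(1)}(n))$ does not make the square commute: it handles the first term but leaves the correction terms involving $G_{(0)}$. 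Each component $G_{(n)}$, including $n=0$ and $n=1$, must instead be obtained as the unique solution of a fixed-point equation of the form $z+\sum_{k\ge1}\frac{1}{k!}\,sf_{(k+1)}(z,G_{(0)},\ldots,G_{(0)})=(\text{known lower-order data})$, which converges because $G_{(0)}\in F^1L$ and the filtration is complete. This second induction, over the filtration rather than over arity, is the step your sketch is missing.

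Two smaller points. You never specify the $K$-component of $G_{(r)}$ for $r\ge2$ (the paper imposes the gauge $pG_{(r)}=0$ there, with $p=1-s\circ\d f$); without such a normalization the system you describe is underdetermined. And your assertion that the $s(M_\sharp)$-component of the morphism axiom for $G$ is ``already satisfied because $f\bullet G$ and $g\bullet F$ must agree'' names the right mechanism but skips the substantive verification: one must combine $f\bullet G=g\bullet F$ with the morphism axioms for $g$ and $F$ and then apply $s$, and this is not automatic, since the morphism axiom for $G$ at arity $r$ also involves $G_{(r+1)}$ evaluated against the curvature of $L\times_MN$, so it is not simply the arity-$r$ component of the commutativity equation in disguise.
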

\begin{proof}
  Choose a section $s:M_\sharp\to L_\sharp$ of $\d f$. This section induces a
  projection $p=1-s\circ \d f:L_\sharp \to L_\sharp$, with image the kernel of
  $s$. The fibered product is realized on the filtered graded vector
  space $pL\times N$. The morphism $F:pL\times N\to N$ is the strict fibration
  given by the projection to the second factor. The morphism
  $G:pL\times M\to L$ satisfies the equations
  $f\bigl( G_{(0)} \bigr) = g_{(0)}$ and $f\bullet G=g\bullet F$, which may be
  written
  \begin{multline*}
    f_{(1)}\bigl( G_{(n)}(\zeta_1,\ldots,\zeta_n) \bigr)
    + \sum_{k=1}^\infty \frac{1}{k!} \Biggl( f_{(k+1)}\bigl(
    G_{(n)}(\zeta_1,\ldots,\zeta_n) , G_{(0)},\ldots,G_{(0)} \bigr)
    + \sum_{\pi\in S_n} \\
    \sum_{\substack{n_1+\cdots+n_k=n\\0\le n_i<n}}
    \frac{(-1)^\epsilon}{n_1!\ldots n_k!} f_{(k)}\bigl( G_{(n_1)}(\zeta_{\pi(1)},\ldots),\ldots,
    G_{(n_k)}(\ldots,\zeta_{\pi(n)}) \bigr) \Biggr) = g_{(k)}(y_1,\ldots,y_k) ,
  \end{multline*}
  where $\zeta_i\in pL \times N$. These equations have a unique solution
  satisfying the gauge conditions
  \begin{equation*}
    pG_{(n)}(\zeta_1,\ldots,\zeta_n) =
    \begin{cases}
      p\zeta_1 , & n=1 , \\
      0 , & \text{otherwise.}
    \end{cases}
  \end{equation*}
  The element $G_{(0)}=z\in F^1L$ is determined by the equation
  \begin{equation*}
    z + \sum_{k=2}^\infty \frac{1}{k!} \, sf_{(k)}( z,\ldots,z ) = sg_{(0)} .
  \end{equation*}
  The element $G_{(1)}(\zeta)=z\in L$ is determined by the equation
  \begin{equation*}
    z + \sum_{k=1}^\infty \frac{1}{k!} \, sf_{(k+1)}\bigl( z,
    G_{(0)},\ldots,G_{(0)} \bigr) = x + sg_{(1)}(F\zeta) .
  \end{equation*}
  The element $G_{(n)}(\zeta_1,\ldots,\zeta_n)=z\in L$ is determined by the
  equation
  \begin{multline*}
    z + \sum_{k=1}^\infty \frac{1}{k!} \, sf_{(k+1)}\bigl( z ,
    G_{(0)},\ldots,G_{(0)} \bigr) = sg_{(n)}(F\zeta_1,\ldots,F\zeta_n) \\
    - \sum_{\pi\in S_n} \sum_{k=2}^\infty \frac{(-1)^\epsilon}{k!}
    \sum_{\substack{n_1+\cdots+n_k=n\\0\le n_i<n}} \frac{1}{n_1!\ldots n_k!} \,
    sf_{(k)}\bigl( G_{(n_1)}(\zeta_{\pi(1)},\ldots),\ldots,G_{(n_k)}(\ldots,\zeta_{\pi(n)}) \bigr) .
  \end{multline*}

  The bracket $\{\!\{ \zeta_1 , \ldots , \zeta_n \}\!\}$ on
  $L\times_MN$ is characterized by its compatibility with $F$ and $G$:
  compatibility with $F$ implies that
  $F\{\!\{ \zeta_1 , \ldots , \zeta_n \}\!\} = \{ F\zeta_1 , \ldots , F\zeta_n \}$, while
  compatibility with $G$, namely the equation,
  \begin{multline}
    \label{G}
    \tag{$\ast$}
    \sum_{\pi\in S_n} \sum_{k=0}^n \frac{(-1)^\epsilon}{k!(n-k)!} \,
    G_{(n-k+1)}( \{\!\{ \zeta_{\pi(1)},\ldots,\zeta_{\pi(k)} \}\!\},\zeta_{\pi(k+1)},\ldots,\zeta_{\pi(n)}) \\
    = \sum_{\pi\in S_n} \sum_{k=0}^\infty \frac{(-1)^\epsilon}{k!}
    \sum_{n_1+\cdots+n_k=n} \frac{1}{n_1!\ldots n_k!} \,
    \{ G_{(n_1)}(\zeta_{\pi(1)},\ldots),\ldots,G_{(n_k)}(\ldots,\zeta_{\pi(n)}) \} ,
  \end{multline}
  identifies the result of applying $p$ to the right-hand side of this
  equation with $p\{\!\{ \zeta_1,\ldots,\zeta_n \}\!\}$.

  To show that $G$ is a morphism of curved \Linf-algebras, we must
  prove \eqref{G}; in light of the definition of $G$, this amounts to
  the equation
  \begin{align}
    \tag{$\ast\ast$}
    & \label{pG}
    \sum_{\pi\in S_n} \sum_{j=0}^n \frac{(-1)^\epsilon}{j!(n-j)!} \,
    (1-p) G_{(n-j+1)}( \{\!\{ \zeta_{\pi(1)},\ldots,\zeta_{\pi(j)} \}\!\},
    \zeta_{\pi(j+1)},\ldots,\zeta_{\pi(n)}) \\
    \notag
    & \qquad = \sum_{\pi\in S_n} \sum_{k=0}^\infty \frac{(-1)^\epsilon}{k!}
    \sum_{n_1+\cdots+n_k=n} \frac{1}{n_1!\ldots n_k!} \, (1-p)
    \{ G_{(n_1)}(\zeta_{\pi(1)},\ldots),\ldots,G_{(n_k)}(\ldots,\zeta_{\pi(n)}) \} .
  \end{align}
  The equation $f\bullet G=g\bullet F$ along with $g$ and $F$ being morphisms of
  curved \Linf-algebras shows that
  \begin{multline*}
    \sum_{\pi\in S_n} \sum_{k=0}^\infty \frac{(-1)^\epsilon}{k!} \sum_{n_0+\cdots+n_k=n}
    \frac{1}{n_0! \ldots n_k!} \\
    \sum_{j=0}^{n_0} \binom{n_0}{j} \,
    f_{(k+1)}\bigl( G_{(n_0)}(\{\zeta_{\pi(1)},\ldots,\zeta_{\pi(j)}\},\zeta_{\pi(j+1)},\ldots),
    \ldots , G_{(n_k)}(\ldots,\zeta_{\pi(n)}) \bigr) \\
    = \sum_{\pi\in S_n} \sum_{j=0}^n \frac{(-1)^\epsilon}{j!(n-j)!} \,
    g_{(n-j+1)}\bigl(\{F\zeta_{\pi(1)},\ldots,F\zeta_{\pi(j)}\}, F\zeta_{\pi(j+1)} , \ldots , F\zeta_{\pi(m)}
    \bigr) .
  \end{multline*}
  Applying $s$ to both sides of this equation gives \eqref{pG}.

  To show that $L\times_MN$ is a pullback, we must show the existence of
  the morphism $\epsilon$ for any commutative diagram of curved
  \Linf-algebras of the form
  \begin{equation*}
    \begin{tikzcd}
      A \arrow[rrd,bend left,"\lambda"] \arrow[ddr,bend right,"\nu"']
      \drar[dotted,"\epsilon"] & & \\
      & L\times_MN \rar{G} \dar[']{F} \pullback & L \dar{f} \\
      & N \rar[']{g} & M
    \end{tikzcd}
  \end{equation*}
  The morphism $\epsilon$ has components
  \begin{equation*}
    \epsilon_{(n)}(z_1,\ldots,z_n) = p\lambda_{(n)}(z_1,\ldots,z_n) \times \nu_{(n)}(z_1,\ldots,z_n)
    . \qedhere
  \end{equation*}
\end{proof}

\begin{corollary}
  Every fibration $f:L\to M$ is isomorphic to a strict fibration $F$.
\end{corollary}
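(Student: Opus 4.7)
The plan is to obtain the strict fibration $F$ as the pullback of $f$ along the identity morphism of $M$. Concretely, I would apply Lemma~\ref{pullback} to the cospan $L \xrightarrow{f} M \xleftarrow{\mathrm{id}_M} M$, obtaining the pullback square
\begin{equation*}
  \begin{tikzcd}
    L\times_M M \arrow[r,"G"] \arrow[d,"F"'] \pullback & L \arrow[d,"f"] \\
    M \arrow[r,"\mathrm{id}_M"'] & M
  \end{tikzcd}
\end{equation*}
in which, by the lemma, $F$ may be taken to be a strict fibration.

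Next, I would argue that $G$ is an isomorphism in $\LL$, which would exhibit $f$ as isomorphic to $F$ in the slice category over $M$ (since the square commutes, $f\bullet G = \mathrm{id}_M \bullet F = F$). For this I appeal purely to the universal property of pullbacks. The outer commutative square formed by $\mathrm{id}_L : L\to L$ and $f : L\to M$ factors uniquely through the pullback, yielding a morphism $\iota : L\to L\times_M M$ with $G\bullet \iota=\mathrm{id}_L$ and $F\bullet \iota=f$. The composite $\iota\bullet G : L\times_M M\to L\times_M M$ then satisfies $G\bullet(\iota\bullet G)=G$ and $F\bullet(\iota\bullet G)=F$, and so equals $\mathrm{id}_{L\times_M M}$ by the uniqueness clause of the universal property applied to the identity cone on the pullback. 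Hence $\iota$ is a two-sided inverse to $G$ in $\LL$.

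There is no real obstacle here: the construction of the pullback and the strictness of $F$ are exactly what Lemma~\ref{pullback} provides, and the rest is a formal diagram chase. The only point worth double-checking is that the universal property stated in Lemma~\ref{pullback} (existence of a unique $\epsilon$ for any compatible pair of morphisms out of a third object $A$) is indeed strong enough to furnish $\iota$ and to conclude uniqueness of $\iota\bullet G$; this is visible from the explicit formula $\epsilon_{(n)}(z_1,\ldots,z_n)=p\lambda_{(n)}(z_1,\ldots,z_n)\times\nu_{(n)}(z_1,\ldots,z_n)$, which is determined by $\lambda$ and $\nu$ alone.
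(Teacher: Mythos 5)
Your proposal is correct and follows exactly the paper's route: the paper's entire proof is ``Apply the theorem with $g$ equal to the identity of $M$,'' i.e.\ pull $f$ back along $\mathrm{id}_M$ via Lemma~\ref{pullback} to obtain the strict fibration $F$. You have merely made explicit the formal step, left implicit in the paper, that $G:L\times_M M\to L$ is an isomorphism because pullback along an identity is an isomorphism.
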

\begin{proof}
  Apply the theorem with $g$ equal to the identity of $M$.
\end{proof}

% It follows that the isomorphisms of $\LL$ are the morphisms such
% that $\d f$ is an isomorphism.

\begin{proposition}
  The categories $\L$ and $\LL$ of \Linf-algebras are categories of
  fibrant objects, and the inclusion $\L\hookrightarrow\LL$ is an exact functor.
\end{proposition}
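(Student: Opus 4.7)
The verification decomposes into checking axioms (W1)--(W2) and (F1)--(F5). Axioms (W1) and (F1) are immediate since an isomorphism $f$ has invertible linear component $\d f$, which is in particular both a quasi-isomorphism and surjective. Axiom (W2) follows from the two-out-of-three property of quasi-isomorphisms together with the fact that $\gr\d$ is functorial (the composition formula reduces to $\gr\d(g\bullet f)=\gr\d g\circ\gr\d f$ because the corrections involve $f_{(0)}\in F^1M$). Axiom (F3) takes the terminal object to be $0$: any morphism into $0$ has trivially surjective linear part. Axiom (F2) is the content of Lemma~\ref{pullback}. Thus (F4), (F5), and the exactness of the inclusion $\L\hookrightarrow\LL$ remain.

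For (F4), suppose $f:L\to M$ is a trivial fibration and $g:N\to M$ arbitrary. The pullback from Lemma~\ref{pullback} is realized on $pL\times N$ with strict projection $F:pL\times N\to N$; because $p=1-s\circ\d f$, the image $pL$ equals $K:=\ker(\d f)$, and hence $\ker(\d F)=K$ as a filtered graded vector space. The companion morphism $G:L\times_MN\to L$ has linear part $G_{(1)}$ whose restriction to $K$ is the inclusion $K\hookrightarrow L$, so the induced map $\gr G_{(1)}:\gr\ker(\d F)\to\gr L$ identifies the associated graded differential inherited from $L\times_MN$ with the one inherited from $L$. Contractibility of $(\gr K,\delta)$, given by hypothesis, therefore passes to $F$, and $F$ is a trivial fibration.

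For (F5) we use the standard mapping path space construction built from the path object $PL=\DR_1\ohat L$. The face maps $\partial_0,\partial_1:PL\to L$ induced by $\epsilon^0_1,\epsilon^1_1$ both have linear parts split by $y\mapsto 1\otimes y$, hence are fibrations. Moreover $\partial_0$ is trivial: $\ker(\d\partial_0)=(\ker\epsilon^0_1)\ohat L$, the Poincar\'e homotopy $h^0_1$ contracts $\ker\epsilon^0_1$, and tensoring this contractible complex with $\gr L$ preserves contractibility. Given $f:L\to M$, form the pullback
\begin{equation*}
  \begin{tikzcd}
    L\times_MPM \arrow[r,"\pi"] \arrow[d,"F"'] \pullback & PM \arrow[d,"\partial_0"] \\
    L \arrow[r,"f"'] & M
  \end{tikzcd}
\end{equation*}
which exists by (F2). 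By (F4) the projection $F$ is a trivial fibration, and the constant-path section $\sigma=(\mathrm{id}_L,s\circ f):L\to L\times_MPM$, with $s(y)=1\otimes y$, is then a weak equivalence by two-out-of-three. The linear part of $\partial_1\circ\pi$ is surjective, since any $y\in M_\sharp$ is hit by $(0,t_1\otimes y)$; hence $\partial_1\circ\pi$ is a fibration. As $\partial_0\circ s=\partial_1\circ s=\mathrm{id}_M$, we have $\partial_1\circ\pi\circ\sigma=f$, which gives the required factorization.

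Finally, the inclusion $\L\hookrightarrow\LL$ preserves the terminal object $0$, weak equivalences, fibrations, and trivial fibrations, since all are defined purely via the linear component $\d f$. For pullbacks along fibrations, the recursive equations in the proof of Lemma~\ref{pullback} show that when both $f$ and $g$ are strict one obtains $G_{(0)}=0$, $G_{(n)}=0$ for $n\ge2$, and $G_{(1)}(\zeta)=p\zeta+sg_{(1)}(F\zeta)$, so the induced bracket is bilinear and the pullback computed in $\LL$ lies in $\L$. The hard part of the argument is (F4): one must unpack the pullback construction carefully enough to confirm that the filtered bracket on $\ker(\d F)$ agrees at the associated graded level with the one induced from $L$. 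Once this identification is in place, the remaining axioms follow uniformly from the path object $\DR_1\ohat L$ combined with the lemmas already established.
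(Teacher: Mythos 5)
Your proof is correct and follows essentially the same route as the paper: axioms (F2) and (F4) via Lemma~\ref{pullback} and the identification of the kernel of the pulled-back fibration with $K=\ker\d f$, and axiom (F5) via the path object $\DR_1\ohat L$ (the paper constructs the path object and invokes the standard equivalence with (F5), whereas you unwind the mapping path space explicitly). The extra details you supply --- the contractibility of $\gr\ker(\d\partial_0)$ via the Poincar\'e homotopy, and the strictness of the pullback of a strict diagram --- are exactly what the paper leaves implicit.
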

\begin{proof}
  The proofs that $\L$ and $\LL$ are categories of fibrant objects are
  identical, so we focus on $\LL$.

  We have already seen that the object $0\in\LL$ is terminal. It is
  clear that every morphism of $\LL$ with target $0$ is a
  fibration. By Lemma~\ref{pullback}, fibrations have pullbacks, and
  the pullback of a fibration is a fibration. Let $f:L\to M$ be a
  fibration, and let $K\subset L$ be the kernel of $\d f$. Then $f$ is a
  trivial fibration if and only if $(\gr K,\delta)$ is contractible; we
  conclude that the pullback of a trivial fibration is a trivial
  fibration.

  The diagonal morphism $L\to L\times L$ factors through
  $\DR_1\ohat L\to L\times L$; this is the fibration taking
  $a(t)+b(t)dt\in L[t,dt]$ to $f(0)\times f(1)$. The inclusion of $L$ in
  $\DR_1\ohat L$ is a strict morphism and a weak equivalence: it is a
  section of the weak equivalences
  $\partial_0,\partial_1:\DR_1\ohat L\to L$ given by projecting
  $L\times L$ to the first and second factors.
\end{proof}

The same proof shows that a skeleton of the subcategory of curved
\Linf-algebras of dimension less than a fixed infinite cardinal
$\aleph>\aleph_0$ is a small category of fibrant objects. (The case
$\aleph=\aleph_0$, follows as in Rogers \cite{Rogers} using an \Linf-structure
$W_1\otimes L$ constructed using Theorem~\ref{Berglund}.)  In the remainder
of this paper, $\LL$ will denote this small category, and $\L$ its
small subcategory of strict morphisms.

Using the description of morphisms in the category $\LL_\pi$ in terms of
the twisting cochain associated to a cofibrant resolution of the Lie
operad, it seems likely that $\LL_\pi$ is also a category of fibrant
objects.

\begin{definition}
  An exact functor $F:C\to D$ between categories of fibrant objects
  satisfies the \textbf{Waldhausen Approximation Property} if
  \begin{enumerate}[1)]
  \item $F$ reflects weak equivalences ($f:x\to y$ is a weak equivalence
    if $F(f):F(x)\to F(y)$ is a weak equivalence);
  \item every morphism $f:z\to F(y)$ in $D$, there is a morphism
    $h:x\to y$ in $C$ and a weak equivalence $g:z\to F(x)$ in $D$ such
    that $f=F(h)g$.
  \end{enumerate}
\end{definition}

Cisinski \cite{Cisinski} proves that an exact functor induces a weak
equivalence of simplicial localizations if it satisfies the Waldhausen
Approximation property.
\begin{proposition}
  The inclusion $\L\hookrightarrow\LL$ satisfies the Waldhausen Approximation
  Property.
\end{proposition}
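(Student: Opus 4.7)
The plan is to verify the two conditions of the Waldhausen Approximation Property separately. Condition (1) is immediate from the definitions: for a strict morphism $f$ one has $\d f = f_{(1)} = f$, so the criterion that $\gr\d f$ be a quasi-isomorphism is unchanged by regarding $f$ as an object of $\L$ or of $\LL$.

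For condition (2), given an \Linf-morphism $f:L\to M$ in $\LL$, I would construct the required strictification in two steps. First, factor $f$ through a path object: let $PM = \DR_1\ohat M$ with its strict trivial fibrations $\partial_0,\partial_1:PM\to M$ and strict constant-inclusion weak equivalence $s:M\to PM$. By Lemma~\ref{pullback}, the pullback $L' = L\times_M PM$ taken along $f$ and $\partial_0$ exists; it comes equipped with a strict trivial fibration $F:L'\to L$ and a generally non-strict morphism $G:L'\to PM$ satisfying $\partial_0 G = fF$. The pair $(\text{id}_L, s\circ f)$ determines a morphism $g:L\to L'$ by the universal property, and $g$ is a section of $F$, hence a weak equivalence by two-out-of-three. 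Setting $h = \partial_1\circ G:L'\to M$, the identity $\partial_1 s = \text{id}_M$ yields $h\circ g = f$.

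The main step is then to show that $h$ is a fibration in $\LL$, so that the Corollary following Lemma~\ref{pullback} may be invoked to strictify it. Inspecting the recursive formula for $G$ in the proof of Lemma~\ref{pullback}, one sees that modulo higher filtration the linear part is $G_{(1)}(x,y) = x + s\,\d f(y)$ for $(x,y)\in p(PM)\times L$, where $p$ is the projection complementary to the constant section; consequently $\gr\d h(x,y) = x(1) + \gr\d f(y)$, which is surjective since $x(1)$ ranges over all of $M$ (take $x = tm$ and $y = 0$). By completeness of the filtration, $\d h$ itself is surjective, so $h$ is a fibration. The Corollary then produces an isomorphism $\phi:L'\to L''$ in $\LL$ and a strict morphism $h'':L''\to M$ with $h = h''\circ\phi$; setting $\tilde g = \phi\circ g$, one has $f = h''\circ\tilde g$ with $h''$ strict and $\tilde g$ a weak equivalence, as required. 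I expect the verification that $h$ is a fibration, via the recursive formula in Lemma~\ref{pullback}, to be the only nontrivial calculation.
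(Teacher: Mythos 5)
Your proof is correct, but it takes a genuinely different route from the paper's. You run Brown's factorization through the mapping path object, $L\to L\times_M(\DR_1\ohat M)\to M$, verify directly from the recursion in Lemma~\ref{pullback} that the second leg $\partial_1\circ G$ is a fibration, and then invoke the Corollary to Lemma~\ref{pullback} to replace that fibration by an isomorphic strict one, absorbing the (non-strict) isomorphism into the weak equivalence. The fibration check is even slightly easier than you suggest: since $\partial_0$ is strict, the recursion gives $\d G(x,y)=x+s\,\d f(y)$ on the nose (here $s$ denotes the chosen linear section of $\d\partial_0$, e.g.\ the constant inclusion), so $\d(\partial_1\circ G)(x,y)=x(1)+\d f(y)$ is surjective outright, and the passage through $\gr$ and completeness is not needed. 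Everything you use is already established earlier in the paper, so your argument is self-contained and elementary. The paper instead resolves the \emph{source}: it takes the dg Lie algebra $\tilde L$ of primitives of the cobar construction of $\C(L)$, for which $p:\tilde L\to L$ is a trivial fibration and $f\circ p$ is automatically strict, and then corrects the wrong-way map $p$ with a path object on $\tilde L$. That route costs the bar--cobar machinery but yields more: it exhibits every curved \Linf-algebra as weakly equivalent to a dg Lie algebra and strictifies all morphisms out of $L$ simultaneously, whereas your construction strictifies one morphism at a time and says nothing about dg Lie models. For the Waldhausen Approximation Property itself, your argument is entirely adequate.
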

\begin{proof}
  The first condition is obvious. The second is proved for a morphism
  $f:L \to M$ as follows. The curved \Linf-algebra has a dg Lie
  resolution $p:\tilde{L}\to L$ such that $p$ is a trivial fibration and
  $fp=\tilde{f}$ is a strict morphism. (We may take $\tilde{L}$ to be
  the space of primitive elements in the cobar construction of
  $\C(L)$; this is a dg Hopf algebra because $\C(L)$ is
  cocommutative.) We take $g$ to be the inclusion of $L$ into
  $L\times_{\tilde{L}}\bigl(\DR_1\ohat\tilde{L}\bigr)$, and $h$ to be the
  projection from $L\times_{\tilde{L}}\bigl(\DR_1\ohat\tilde{L}\bigr)$ to
  $\tilde{L}$.
\end{proof}

The following result justifies our definition of trivial fibrations.
\begin{theorem}
  \label{surjective}
  If $f:L\to M$ is a trivial fibration, the map $f:\MC(L)\to\MC(M)$ is
  surjective.
\end{theorem}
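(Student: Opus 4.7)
The plan is to solve $\MC(x) = 0$ with $f(x) = y$ by successive approximation along the filtration, constructing $x$ as the limit of a sequence $(x_p)_{p \ge 0}$ with $x_0 = 0$ and $x_{p+1} - x_p \in F^{p+1}L^0$. First I would reduce to the case that $f$ is strict by invoking the corollary to Lemma~\ref{pullback}; then $K = \ker f$ is closed under the unary bracket (since $f$ commutes with brackets), so $(\gr K, \delta)$ is a chain complex, contractible by hypothesis. Fix a filtered graded section $\sigma$ of $f$ and a contracting homotopy $h$ on $(\gr K, \delta)$.

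The inductive hypothesis is $\MC(x_p) \in F^{p+1}L^1$ and $f(x_p) \equiv y \pmod{F^{p+1}M^0}$; the base case $x_0 = 0$ is immediate since $\{\} \in L^1 = F^1L^1$ by pro-nilpotency. The key linearization,
\[
  \MC(x + u) \equiv \MC(x) + \delta u \pmod{F^{q+1}L^1}, \qquad u \in F^q L^0,\ q \ge 1,
\]
follows by expanding $\{(x+u)^{\otimes n}\}$ and observing that terms containing $u^{\otimes m}$ lie in $F^{mq}$, so for $q \ge 1$ only $m = 0$ and $m = 1$ contribute modulo $F^{q+1}$. Writing $x_{p+1} = x_p + u_0 + w$ with $u_0 = \sigma(y - f(x_p)) \in F^{p+1}L^0$ and $w \in F^{p+1}K^0$ to be determined, one has $f(x_{p+1}) = y$ exactly, and the Maurer--Cartan condition modulo $F^{p+2}$ reduces to solving $\delta \bar w = -\bar \alpha$ in $\gr^{p+1}K$, where $\alpha = \MC(x_p) + \delta u_0 \in F^{p+1}L^1$.

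The main obstacle is verifying that $\bar \alpha$ is a $\delta$-cycle in $\gr^{p+1}K$. Applying the strict morphism $f$ gives $f(\alpha) = \MC(f(x_p)) + \delta(y - f(x_p)) \equiv \MC(y) = 0 \pmod{F^{p+2}M^1}$, so $\bar \alpha \in \gr^{p+1}K^1$. That it is a cycle follows from the Bianchi-type identity obtained by setting every argument of the $L_\infty$-Jacobi relation equal to $x_p$,
\[
  \sum_{m=0}^{\infty} \frac{1}{m!} \{\MC(x_p), x_p^{\otimes m}\} = 0,
\]
combined with the estimate that for $m \ge 1$ the term $\{\MC(x_p), x_p^{\otimes m}\}$ lies in $F^{(p+1)+m}L^2 \subseteq F^{p+2}L^2$; hence $\{\MC(x_p)\} \equiv 0 \pmod{F^{p+2}}$, i.e., $\delta \bar \alpha = 0$. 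Contractibility of $(\gr^{p+1}K,\delta)$ then supplies $\bar w := -h \bar \alpha$ with $\delta \bar w = -\bar \alpha$; any lift $w \in F^{p+1}K^0$ completes the step, and completeness of the filtration yields $x = \lim_p x_p$ with $\MC(x) = 0$ and $f(x) = y$.
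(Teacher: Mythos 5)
Your argument is correct, but it takes a genuinely different route from the paper's. The paper first uses Lemma~\ref{pullback} to reduce to the case $M=0$, so that the claim becomes: a contractible curved \Linf-algebra has a Maurer--Cartan element. It then lifts a contracting homotopy from $\gr L$ to a gauge $h$ on $L$, writes down the curved Kuranishi fixed-point equation $x=\Phi(x)$ (equivalent to $hx=0$ plus the Maurer--Cartan equation with $h$ applied), proves existence and uniqueness of the fixed point by iteration and completeness, and only at the very end verifies that the fixed point actually satisfies the Maurer--Cartan equation, via a second fixed-point argument for its curvature $z=\Psi(z)$ --- which is where the Bianchi identity enters, once. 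You instead keep $M$ general (reducing only to $f$ strict), and run a classical obstruction-theoretic induction up the filtration: at each stage you correct $x_p$ by $\sigma(y-f(x_p))$ plus an element of $F^{p+1}K$, use the Bianchi identity $\sum_m \frac{1}{m!}\{\MC(x_p),x_p^{\otimes m}\}=0$ together with the filtration estimates to show the leading obstruction is a $\delta$-cycle in $\gr^{p+1}K$, and kill it by acyclicity. Both proofs are sound and both ultimately rest on the same two inputs (contractibility of $\gr K$ and the Bianchi identity), but they package them differently. The paper's version buys a \emph{canonical} solution --- the unique one in the gauge $hx=0$ --- and this gauge-fixing mechanism is precisely the template reused for $\gamma_\bullet(L)$ and the retraction $\rho$ later in the paper; your version is more elementary and self-contained but non-canonical (the lifts $w$ of $\bar w$ are arbitrary), so it proves surjectivity without producing the distinguished preimage. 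Two small points of care worth making explicit if you write this up: $\delta u_0$ should be read as the class of $\{u_0\}$ in $\gr^{p+1}$ (the operator $\delta$ lives only on $\gr$), and the identification $\gr(\ker\d f)=\ker(\gr\d f)$ that you use to place $\bar\alpha$ in $\gr^{p+1}K$ requires the filtered section of $\d f$ furnished by Lemma~\ref{fibration}.
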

\begin{proof}
  By universality, it suffices to construct a Maurer--Cartan element of
  the curved \Linf-algebra $L\times_M0$ of Lemma~\ref{fibration} associated
  to the diagram
  \begin{equation*}
    \begin{tikzcd}
      L \times_M 0 \rar \pullback \dar & L \dar{f} \\
      0 \arrow[r,"y"'] & M
    \end{tikzcd}
  \end{equation*}
  In other words, we may assume in the proof of the theorem that
  $M=0$, and we are reduced to showing that a contractible
  \Linf-algebra $L$ has a Maurer--Cartan element.

  Since $L$ is contractible, the differential $\delta_i$ on $\gr^iL$
  induced by $x\to\{x\}$ has a contracting homotopy
  $h_i:\gr^i L\to\gr^i L$, satisfying $\delta_ih_i+h_i\delta_i=1$. Replacing
  $h_i$ by $h_i\delta_ih_i$, we may assume that $h_i^2=0$. Lift $h$ to
  $L$, by choosing a splitting of the filtration on $L$, that is,
  isomorphisms
  \begin{equation*}
    L/F^pL \cong \bigoplus_{i<p} \gr^iL
  \end{equation*}
  as in the proof of Lemma~\ref{fibration}, and defining $h$ to be the
  map on $L$ induced by the maps
  \begin{equation*}
    h_p = \bigoplus_{i<p} h^i
  \end{equation*}
  on $L/F^pL$. If $x\in F^pL$, we have
  \begin{equation*}
    x - h\{x\} - \{hx\} \in F^{p+1}L .
  \end{equation*}
  We show that there is a (unique) Maurer--Cartan element $x\in\MC(L)$
  such that $hx=0$. Applying $h$ to the Maurer--Cartan equation, we
  obtain the (curved) Kuranishi equation
  \begin{align*}
    x &= x - \sum_{n=0}^\infty \frac{1}{n!} \, h\,\{x^{\otimes n}\} \\
      &= - h\{\} + \bigl( x - h\{x\} \bigr) - \sum_{n=2}^\infty \frac{1}{n!}
        \, h\,\{x^{\otimes n}\} = \Phi(x) .
  \end{align*}

  If $x$ and $y$ are two solutions of this equation and $x-y\in F^pL$,
  then
  \begin{equation*}
    x - y = \bigl( x - h\{x\} \bigr) - \bigl( y - h\{y\} \bigr)
    - \sum_{m+n>0} \frac{1}{(m+n+1)!} \, h\,\{x-y,x^{\otimes m},y^{\otimes n}\}
    \in F^{p+1}L ,
  \end{equation*}
  and hence $x=y$. Thus, solutions to this equation are unique.

  A similar argument shows that a solution exists: set $x_0=0$ and
  $x_{k+1}=\Phi(x_k)$. We have
  \begin{equation*}
    x_{k+1} - x_k = \bigl( x_k - h\{x_k\} \bigr)
    - \bigl( x_{k-1} - h\{x_{k-1}\} \bigr)
    - \sum_{m+n>0}
    \frac{1}{(m+n+1)!} \, h\,\{x_k-x_{k-1},x_k^{\otimes m},x_{k-1}^{\otimes n}\} .
  \end{equation*}
  We see by induction that $x_k-x_{k-1}\in F^kL$, and hence by
  completeness of the filtration on $L$ that the limit
  $x_\infty=\lim_{k\to\infty}x_k$ exists.
  
 Then $x_\infty=\Phi(x_\infty)$, and it remains to show that $x_\infty\in\MC(L)$. Let
  \begin{equation*}
    z = \sum_{n=0}^\infty \frac{1}{n!} \{ x_\infty^{\otimes n} \}
  \end{equation*}
  be the curvature of $x_\infty$. The Kuranishi equation implies that
  \begin{equation*}
    z = \bigl( z - h\{z\} \bigr) - \sum_{n=1}^\infty \frac{1}{n!} \,
    h \{ x_\infty^{\otimes n} , z \} = \Psi(z) .
  \end{equation*}
  The fixed-point equation $z=\Psi(z)$ has a unique solution $z=0$,
  showing that $x_\infty\in\MC(L)$.
\end{proof}

The proof that $\MC_\bullet(L)$ is fibrant relies on the following extension
lemma of Bousfield and Gugenheim \cite{BG}*{Corollary 1.2}. If $X$ is
a simplicial set, the dg commutative algebra of differential forms on
$X$ is the limit
\begin{equation*}
  \DR(X) = \int_{[n]\in\triangle} \Hom(X_n,\DR_n) .
\end{equation*}
The set $\Hom(X,\MC_\bullet(L))$ of simplicial maps from $X_\bullet$ to the nerve
is naturally equivalent to $\MC(\DR(X)\ohat L)$.
\begin{lemma}
  \label{extension}
  If $i:X\to Y$ is a cofibration of simplicial sets (that is, $i_n$ is a
  monomorphism for all $n$), the morphism
  $(i^*)_\sharp:\DR(Y)_\sharp\to\DR(X)_\sharp$ has a section $\sigma:\DR(X)_\sharp\to\DR(Y)_\sharp$.
\end{lemma}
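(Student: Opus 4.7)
The plan is to reduce the statement to a one-cell extension problem by skeletal induction, and then to solve that local problem by an ideal-theoretic argument in the polynomial algebras $\DR_n$, following Bousfield--Gugenheim.

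First I would filter $Y$ relative to $X$ by setting $Y^{(-1)}=X$ and letting $Y^{(n)}$ be the simplicial subset obtained from $Y^{(n-1)}$ by adjoining the non-degenerate $n$-simplices of $Y$ that are not already in $X$. Each stage fits in a pushout square
\begin{equation*}
  \begin{tikzcd}
    \coprod_{\alpha} \partial\triangle^n \arrow[r] \arrow[d] & Y^{(n-1)} \arrow[d] \\
    \coprod_{\alpha} \triangle^n \arrow[r] & Y^{(n)} ,
  \end{tikzcd}
\end{equation*}
indexed by the new $n$-simplices $\alpha$. Because $\DR(-)$ is defined as an end over the simplex category, it carries pushouts of simplicial sets to pullbacks of graded vector spaces; so constructing $\sigma$ reduces to producing graded linear sections $\sigma^{(n)}:\DR(X)_\sharp\to\DR(Y^{(n)})_\sharp$ extending $\sigma^{(n-1)}$, one inductive step for each $n$. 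The inductive step in turn reduces to the construction, once and for all, of a graded linear section
\begin{equation*}
  s_n : \DR(\partial\triangle^n)_\sharp \to (\DR_n)_\sharp
\end{equation*}
of the boundary-restriction map, applied on each new simplex $\alpha$ to the form $\sigma^{(n-1)}(\omega)|_{\partial\alpha}$.

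To produce $s_n$, I would induct on the number of faces handled. A single face $\partial_j\triangle^n$ is the locus $t_j=0$, and a polynomial form on it extends to $\triangle^n$ by simply reading the same polynomial as one not involving $t_j$ or $dt_j$, handling the base case. Given an extension $\Omega\in\DR_n$ already agreeing with the prescribed data on the faces $\partial_{j_1}\triangle^n,\ldots,\partial_{j_k}\triangle^n$, the difference $\psi$ between the prescribed form on a new face $\partial_{j_{k+1}}\triangle^n$ and $\Omega|_{\partial_{j_{k+1}}\triangle^n}$ vanishes on each intersection $\partial_{j_{k+1}}\triangle^n\cap\partial_{j_i}\triangle^n$, and so lies in the ideal of polynomial forms on $\partial_{j_{k+1}}\triangle^n$ generated by $t_{j_1},dt_{j_1},\ldots,t_{j_k},dt_{j_k}$. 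Extending $\psi$ to $\triangle^n$ by the obvious inclusion of polynomial algebras yields a form still vanishing on the previously treated faces and restricting correctly to $\partial_{j_{k+1}}\triangle^n$; adding this correction to $\Omega$ completes the step.

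The main obstacle is to make the whole procedure canonical and linear in the input data: the face ordering, the identifications $\partial_j\triangle^n\cong\triangle^{n-1}$, and the base-case extensions must all be chosen uniformly, so that a single graded linear section $s_n$ is well defined and the $s_n$ are compatible enough to glue the $\sigma^{(n)}$ in the limit. With these choices fixed once and for all, defining $\sigma^{(n)}(\omega)$ on each new $n$-simplex $\alpha$ by $s_n\bigl(\sigma^{(n-1)}(\omega)|_{\partial\alpha}\bigr)$ yields $\sigma^{(n)}$ inductively, and the limit over $n$ is the desired section $\sigma$.
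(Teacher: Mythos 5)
Your reduction to the generating cofibrations $\partial\triangle^n\to\triangle^n$ is the same first step as the paper's, and the skeletal gluing is fine; the gap is in the face-by-face construction of $s_n$. The inference ``$\psi$ vanishes on each $\partial_{j_{k+1}}\triangle^n\cap\partial_{j_i}\triangle^n$, and so lies in the ideal generated by $t_{j_1},dt_{j_1},\ldots,t_{j_k},dt_{j_k}$'' confuses the intersection of the ideals $(t_{j_i},dt_{j_i})$ with their sum: membership in the sum ideal only forces vanishing on the common intersection $\bigcap_{i\le k}\partial_{j_i}\triangle^n$ of all the treated faces, not on each face separately, so the claim that the extended correction ``still vanishes on the previously treated faces'' does not follow from what you have established. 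The failure is already visible for $n=1$: the faces are the two vertices, $\psi$ is a nonzero constant $c$ at the vertex $t_{j_2}=0$ (the intersection condition is vacuous, and the ``ideal generated by $t_{j_1},dt_{j_1}$'' is the unit ideal), and extending $c$ by the inclusion of polynomial algebras gives the constant $c$ on $\triangle^1$, which does not vanish at the previously treated vertex $t_{j_1}=0$; the correct correction is $c\,t_{j_1}$, which your recipe does not produce.

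What is actually needed is that $\psi$ lie in each ideal $(t_{j_i},dt_{j_i})$ separately (equivalently in their intersection), together with an extension operator that preserves each of these ideals. A multiplicative section fixing every $t_{j_i},dt_{j_i}$ with $i\le k$ (pullback along the affine collapse of vertex $j_{k+1}$ onto a spare vertex $j_0\notin\{j_1,\ldots,j_{k+1}\}$) achieves this for $k<n$, but at the last face there is no spare vertex: since $\sum_j t_j=1$, no algebra section of the restriction $\DR_n\to\DR(\partial_{j_{n+1}}\triangle^n)$ can fix all of $t_{j_1},\ldots,t_{j_n}$. There you must instead decompose $\psi$ in the product ideal $\prod_{i\le n}(t_{j_i},dt_{j_i})$ and extend the cofactors, a step your argument does not supply and which must also be made linear in $\psi$. (A smaller issue: ``reading the same polynomial'' off a face is not well defined, because the relation $\sum_{i\ne j}t_i=1$ holds on $\partial_j\triangle^n$ but not on $\triangle^n$, so even the base case requires a choice.) The paper sidesteps all of this bookkeeping with the single closed formula $\sigma_n\omega=\sum_{i=0}^n t_i\sum_{\emptyset\ne J}(-1)^{|J|-1}\sigma_{i,J}^*\omega$, whose restriction to each face is checked directly by a cancellation; if you prefer the inductive route, it can be repaired along the lines of the Bousfield--Gugenheim extendability argument, but the corrections must be organized around products of the face ideals, not their sum.
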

\begin{proof}
  By induction, it suffices to prove the result for the generating
  cofibrations $\partial\triangle^n\to\triangle^n$, $n\ge0$. We give a formula for a section
  $\sigma_n:\DR(\partial\triangle^n)_\sharp\to\DR(\triangle^n)_\sharp$:
  \begin{equation*}
    \sigma_n\omega = \sum_{i=0}^n t_i \sum_{\emptyset\ne J\subset\{0,\ldots,\hat\imath,\ldots,n\}} (-1)^{|J|-1}
    \sigma_{i,J}^*\omega ,
  \end{equation*}
  where $\sigma_{i,J}:\triangle^n\to\triangle^n$ is the affine morphism that takes the
  vertices $e_j\in\triangle^n$, $j\in J$, to $e_i$, leaving the remaining vertices
  fixed. (This formula comes from the proof of
  \cite{Linf}*{Lemma~3.2}, which was suggested to the author by a
  referee of that article.) Consider the restriction of
  $\sigma_n\omega$ to $\partial_j\Delta^n=\{t_j=0\}$. For $i\ne j$, the sum
  \begin{equation*}
    \sum_{\emptyset\ne J\subset\{0,\ldots,\hat\imath,\ldots,n\}} (-1)^{|J|-1} \sigma_{i,J}^*\omega|_{t_j=0}
  \end{equation*}
  equals $\omega|_{t_j=0}$, since for $J\subset\{0,\ldots,n\}\setminus\{i,j\}$, we have
  \begin{equation*}
    \sigma_{i,J}^*\omega|_{t_j=0} = \sigma_{i,J\cup\{i\}}^*\omega|_{t_j=0} \bigr) ,
  \end{equation*}
  and thus all of the terms cancel except
  $\sigma_{j,\{j\}}^*\omega|_{t_j=0} = \omega|_{t_j=0}$. Taking the sum
  over $i$ in $\sigma_n^*\omega|_{t_j=0}$, we obtain
  $\omega|_{t_j=0}$. That is, the restriction of
  $\sigma_n^*\omega$ to $\partial_j\Delta^n$ equals $\omega$.
\end{proof}

\begin{corollary}
  \label{Fibration}
  If $f:L\to M$ is a fibration of curved \Linf-algebras and $i:X\to Y$ is
  a cofibration of simplicial sets, the strict morphism
  \begin{equation*}
    \epsilon : \DR(Y) \ohat L \to \bigl( \DR(X)\ohat L \bigr)
    \times_{\DR(X)\ohat M} \bigl( \DR(Y) \ohat M \bigr)
  \end{equation*}
  is a fibration.
\end{corollary}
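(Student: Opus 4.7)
Since $\epsilon$ is strict, it is a fibration exactly when its linear component is surjective on underlying filtered graded vector spaces. I would first observe that the pullback in the target exists by Lemma~\ref{pullback}: the strict morphism $1\ohat f:\DR(X)\ohat L\to\DR(X)\ohat M$ has linear component $1\ohat\d f$, which is surjective because $\d f$ is. Moreover, the explicit construction in the proof of that lemma identifies the underlying filtered graded vector space of the pullback with the set-theoretic fibered product, whose elements are pairs $(\alpha,\beta)\in(\DR(X)\ohat L)\times(\DR(Y)\ohat M)$ satisfying the compatibility condition $(1\ohat\d f)\alpha=(i^*\ohat 1)\beta$.

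With the target described in this concrete way, the plan is to build a lift of an arbitrary compatible pair $(\alpha,\beta)$ by combining two sections coming from the hypotheses. Lemma~\ref{fibration} supplies a filtered graded section $s:M_\sharp\to L_\sharp$ of $\d f$, and Lemma~\ref{extension} supplies a filtered graded section $\sigma:\DR(X)_\sharp\to\DR(Y)_\sharp$ of the restriction $i^*:\DR(Y)\to\DR(X)$. Writing $p=1-\sigma i^*$ for the resulting projection of $\DR(Y)$ onto $\ker(i^*)$, I would define
\begin{equation*}
  \gamma=(\sigma\ohat 1)\alpha+(p\ohat s)\beta\in\DR(Y)\ohat L
\end{equation*}
and verify, using the identities $i^*\sigma=1$, $\d f\circ s=1$, and the compatibility relation on $(\alpha,\beta)$, that $(i^*\ohat 1)\gamma=\alpha$ and $(1\ohat\d f)\gamma=\beta$. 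The second identity uses the telescoping $\sigma i^*+p=1$ after rewriting $(\sigma\ohat\d f)\alpha=(\sigma\ohat 1)(1\ohat\d f)\alpha=(\sigma i^*\ohat 1)\beta$ via compatibility.

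I do not anticipate any serious obstacle here. The one subtlety worth flagging is that the naive guess $(\sigma\ohat 1)\alpha+(1\ohat s)\beta$ fails: under $i^*\ohat 1$ it picks up an extra term $(1\ohat s\d f)\alpha$, because $s\d f$ is only a projection. Inserting the factor $p$ precisely cancels this unwanted term, without disturbing the other coordinate since $\d f\circ s=1$. Once the correct formula is written down the verification reduces to routine linear bookkeeping, so the content of the corollary is really just the assembly of Lemmas~\ref{fibration} and~\ref{extension}.
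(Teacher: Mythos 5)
Your proof is correct and is essentially the paper's argument: both reduce the claim to exhibiting a linear section of $\epsilon_\sharp$ assembled from the section $s$ of $\d f$ (Lemma~\ref{fibration}) and the section $\sigma$ of $i^*$ (Lemma~\ref{extension}). The paper merely packages the section as $(\sigma\otimes 1)\oplus 1$ under the identification of the target's underlying space with $(\DR(X)\ohat K)_\sharp\oplus(\DR(Y)\ohat M)_\sharp$, where $K=\ker\d f$, which is the same map as your $\gamma=(\sigma\ohat 1)\alpha+(p\ohat s)\beta$ written in different coordinates.
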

\begin{proof}
  Let $K\subset L$ be the kernel of $\d f:L\to M$. We have an identification
  of filtered graded vector spaces
  \begin{equation*}
    \bigl( \bigl( \DR(X)\ohat L \bigr) \times_{\DR(X)\ohat M} \bigl(
    \DR(Y) \ohat M \bigr) \bigr)_\sharp \cong
    \bigl( \DR(X)\ohat K \bigr)_\sharp \oplus \bigl( \DR(Y) \ohat M \bigr)_\sharp .
  \end{equation*}
  By Lemma~\ref{extension}, this morphism has a section $(\sigma\otimes1) \oplus 1$.
\end{proof}

\begin{proposition}
  \label{exact}
  The functor $\MC_\bullet(L)$ is an exact functor from the category
  $\LL$ of curved \Linf-algebras to the category $\S$ of fibrant
  simplicial sets.
\end{proposition}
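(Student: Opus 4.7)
Exactness requires four things: $\MC_\bullet$ should send the terminal object $0$ of $\LL$ to a point, preserve pullbacks along fibrations, preserve fibrations, and preserve trivial fibrations; in addition each $\MC_\bullet(L)$ must be a Kan complex. The first is immediate, since $\MC(\DR_n\ohat 0)$ is a one-point set for every $n$. For pullbacks, the corollary to Lemma~\ref{pullback} lets us replace a fibration $f:L\to M$ by an isomorphic strict one, so that $L\times_MN$ is the ordinary pullback of underlying filtered graded vector spaces; tensoring with $\DR_n$ preserves this pullback, and the already-noted left-exactness of the set-valued functor $\MC$ on $\LL$ delivers $\MC_n(L\times_MN)=\MC_n(L)\times_{\MC_n(M)}\MC_n(N)$ levelwise.

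The remaining three assertions I would handle uniformly via the adjunction $\Hom(X_\bullet,\MC_\bullet(L))=\MC(\DR(X)\ohat L)$. They reduce to showing, for a cofibration $i:X\to Y$ of simplicial sets and a fibration $f:L\to M$ of curved \Linf-algebras, surjectivity of
\[
  \MC(\DR(Y)\ohat L)\to\MC\bigl((\DR(X)\ohat L)\times_{\DR(X)\ohat M}(\DR(Y)\ohat M)\bigr)
\]
under the additional hypothesis that either $i$ is a trivial cofibration or $f$ is a trivial fibration. Corollary~\ref{Fibration} provides a strict fibration upstairs; its kernel is $K\ohat J$, where $K=\ker(\DR(Y)\to\DR(X))$ and $J=\ker\d f$, and since the filtration lives entirely on $J$ we have $\gr(K\ohat J)=K\otimes\gr J$. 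If $i$ is a trivial cofibration then $K$ is acyclic and hence contractible, making $K\otimes\gr J$ contractible; if $f$ is a trivial fibration then $\gr J$ is already contractible, with the same conclusion. Thus the morphism is a trivial fibration in $\LL$, and Theorem~\ref{surjective} upgrades it to a surjection on Maurer-Cartan loci.

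Three specialisations now do the work: $i=\Lambda^n_j\hookrightarrow\triangle^n$ (with $0<j\le n$) and $f:L\to 0$ gives that $\MC_\bullet(L)$ is a Kan complex; the same horn inclusion with arbitrary fibration $f$ gives that $\MC_\bullet(f)$ is a fibration; and $i=\partial\triangle^n\hookrightarrow\triangle^n$ with $f$ a trivial fibration gives that $\MC_\bullet(f)$ is a trivial fibration. The principal obstacle is the acyclicity of $K$ in the first two cases: this is the standard fact that the Bousfield--Gugenheim polynomial de Rham functor $\DR$ sends trivial cofibrations of simplicial sets to surjective quasi-isomorphisms of commutative dg algebras, which I would cite rather than reprove.
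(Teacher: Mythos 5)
Your proposal is correct, and its skeleton is the paper's: treat the terminal object and pullbacks along fibrations directly, then reduce the fibration and trivial-fibration statements (and Kan-ness, as the case $M=0$) to Theorem~\ref{surjective} applied to the comparison morphism $\epsilon:\DR(Y)\ohat L\to(\DR(X)\ohat L)\times_{\DR(X)\ohat M}(\DR(Y)\ohat M)$, which one must show is a trivial fibration. Where you genuinely diverge is in proving that $\epsilon$ is a weak equivalence. The paper treats the horn and boundary cases separately and argues by two-out-of-three in the pullback square: in the horn case it uses the Poincar\'e homotopy $h^n_i$ --- which restricts to $\DR(\Lambda^n_i)$ because every face of $\Lambda^n_i$ contains the $i$th vertex --- to show that $\DR_n\ohat L$ and $\DR(\Lambda^n_i)\ohat L$ both retract onto $L$, so the restriction map and its analogue for $M$ are trivial fibrations and $\epsilon$ is a weak equivalence; in the boundary case it instead propagates the trivial fibration $f$ through the square. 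You instead identify $\ker\d\epsilon$ as $K\ohat J$ with $K=\ker(\DR(Y)\to\DR(X))$ and $J=\ker\d f$, observe $\gr(K\ohat J)\cong K\otimes\gr J$, and note that a contracting homotopy on either tensor factor contracts the whole complex --- a single argument covering both cases at once, at the cost of citing the Bousfield--Gugenheim fact that $\DR$ sends anodyne extensions to surjective quasi-isomorphisms, where the paper stays self-contained via the explicit dilation homotopy. Both routes are valid; yours is more uniform, the paper's more explicit. One small imprecision on the pullback step: even after replacing $f$ by a strict fibration, $L\times_MN$ is realized on $pL\times N$ rather than on the literal vector-space pullback when $g$ is not strict; this is harmless, since $\MC=\Hom(0,-)$ preserves all limits and $\DR_n\ohat(-)$ visibly commutes with the construction of Lemma~\ref{pullback}.
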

\begin{proof}
  It is clear that $\MC_\bullet(L)$ takes the terminal curved \Linf-algebra
  $0$ to the terminal simplicial set $\ast$, and fibered products with
  fibrations to fibered products. It remains to show that if
  $f:L\to M$ is a (trivial) fibration, the morphism
  $\MC_\bullet(f):\MC_\bullet(L)\to\MC_\bullet(M)$ of simplicial sets is a (trivial)
  fibration of simplicial sets.

  We first show that $\MC_\bullet(f):\MC_\bullet(L)\to\MC_\bullet(M)$ is a fibration if
  $f:L\to M$ is. By Theorem~\ref{surjective}, this follows once we
  show that for each $0<i\le n$, the strict morphism of curved
  \Linf-algebras
  \begin{equation}
    \label{trivial:horn}
    \epsilon : \DR_n \ohat L \to \bigl( \DR(\Lambda^n_i)\ohat L \bigr)
    \times_{\DR(\Lambda^n_i)\ohat M} \bigl( \DR_n \ohat M \bigr)
  \end{equation}
  is a trivial fibration. It is a fibration by
  Corollary~\ref{Fibration}. It remains to show that it is a weak
  equivalence.

  Consider the commutative diagram
  \begin{equation*}
    \begin{tikzcd}[column sep=huge]
      \DR_n \ohat L \arrow[rrd,bend left=5,"\alpha"] \arrow[ddr,bend right=15]
      \drar["\epsilon"] & & \\
      & \bigl( \DR(\Lambda^n_i) \ohat L \bigr) \times_{\DR(\Lambda^n_i)\ohat M} \bigl( \DR_n
      \ohat M \bigr) \rar["\beta"'] \dar \pullback & \DR(\Lambda^n_i) \ohat L \dar \\
      & \DR_n \ohat M \rar["\gamma"'] & \DR(\Lambda^n_i) \ohat M
    \end{tikzcd}
  \end{equation*}
  The contracting homotopy $h^n_i\otimes 1$ on $\DR_n\ohat L$ satisfies
  \begin{equation*}
    (d\otimes1+1\otimes\delta)h^n_i + h^n_i(d\otimes1+1\otimes\delta) = 1 - \epsilon^n_i\otimes 1 ,
  \end{equation*}
  and its restriction to $\DR(\Lambda^n_i)\ohat L$ satisfies the same
  equation. This shows that the downward arrows in the commutative
  diagram
  \begin{equation*}
    \begin{tikzcd}
      \DR_n \ohat L \ar[rr,"\alpha"] \ar[rd,"\epsilon^n_i\otimes1"'] & &
      \DR(\Lambda^n_i) \ohat L \arrow[ld,"\epsilon^n_i\otimes1"] \\
      & L
    \end{tikzcd}
  \end{equation*}
  are quasi-isomorphisms. It follows that $\alpha$ is a weak equivalence,
  and hence a trivial fibration. The same argument with $L$ replaced
  by $M$ shows that $\gamma$ is a trivial fibration, and hence that its
  pullback $\beta$ is a trivial fibration. Finally, we see that
  $\epsilon$ is a weak equivalence.

  It remains to show that
  $\MC_\bullet(f):\MC_\bullet(L)\to\MC_\bullet(M)$ is a trivial fibration if
  $f:L\to M$ is. By Theorem~\ref{surjective}, this follows once we show
  that for each $n\ge0$, the strict morphism of curved \Linf-algebras
  \begin{equation}
    \label{trivial:boundary}
    \epsilon : \DR_n\ohat L \to \bigl( \DR(\partial\triangle^n)\ohat L \bigr)
    \times_{\DR(\partial\triangle^n)\ohat M} \bigl( \DR_n\ohat M \bigr)
  \end{equation}
  is a trivial fibration. It is a fibration by
  Lemma~\ref{fibration}. It remains to show that it is a weak
  equivalence.

  Consider the commutative diagram
  \begin{equation*}
    \begin{tikzcd}[column sep=huge]
      \DR_n \ohat L \arrow[rrd,bend left=5] \arrow[ddr,bend
      right=15,"\alpha"'] \drar["\epsilon"] & & \\
      & \bigl( \DR(\partial\triangle^n) \ohat L \bigr) \times_{\DR(\partial\triangle^n)\ohat M} \bigl(
      \DR_n \ohat M \bigr) \rar \dar["\beta"] \pullback & \DR(\partial\triangle^n) \ohat L
      \dar["\gamma"] \\
      & \DR_n \ohat M \rar & \DR(\partial\triangle^n) \ohat M
    \end{tikzcd}
  \end{equation*}
  Since $f$ is a trivial fibration, we see that $\alpha$, $\beta$ and
  $\gamma$ are as well. We conclude that $\epsilon$ is a weak equivalence, and
  hence a trivial fibration.
\end{proof}

The functor $\MC_\bullet(L)$ restricts to an exact functor from the category
of curved \Linf-algebras of dimension less than $\aleph$ to the category of
Kan complexes of cardinality less than $|\F|^\aleph$.

%\pagebreak

\section*{Homotopical perturbation theory for curved \Linf-algebras} 

\begin{definition}
  A \textbf{contraction} of filtered complexes from $(V,\D)$ to
  $(W,d)$ consists of filtered morphisms of complexes $p:V\to W$ and
  $i:W\to V$ and a map $h:V\to V[-1]$, compatible with the filtration,
  such that
  \begin{align*}
    ip + \D h + h\D & = 1_W , &
    pi & = 1_V , &
    h^2 & = ph = hi = 0
  \end{align*}
\end{definition}

Up to isomorphism, the contraction is determined by the graded vector
space $V$, the differential $\D$ and the map $h$: the complex $(W,d)$
may be identified with the kernel of the morphism $\D h+h\D:V\to V$, the
map $i$ is the inclusion of this kernel in $V$, and $p$ is the
projection from $V$ to the kernel. Contractions were called gauges in
\cite{Linf}.

Let $h$ be a contraction of filtered complexes from $(V,\D)$ to
$(W,d)$. A Maurer-Cartan element $\mu\in \End(V)$ such that $1+\mu h$ (and
hence $1+h\mu$) is invertible gives rise to a new contraction, by the
standard formulas
\begin{align*}
  \D_\mu &= \D+\mu , & h_\mu &= (1+h\mu)^{-1}h , &
   d_\mu &= d + p(1+\mu h)^{-1}\mu i \\
  & & i_\mu &= (1+h\mu)^{-1} i , & p_\mu &= p(1+\mu h)^{-1} .
\end{align*}

Unless its curvature vanishes, a curved \Linf-algebra does not have an
underlying filtered complex. For this reason, the differential $\D$
must be additional data in the definition of a contraction for curved
\Linf-algebras.
\begin{definition}
  A \textbf{contraction} of a curved \Linf-algebra $L$ is a
  contraction between filtered complexes $(V,\D)$ and $(W,d)$ and an
  isomorphism of filtered graded vector spaces $L_\sharp \cong V_\sharp$ such that
  the induced differential on $L$, which we denote by $\D$, satisfies
  $\{x\}-\D x\in F^{p+1}L$ for $x\in F^pL$.
\end{definition}

% If the curvature of $L$ vanishes, for example when $L$ is the
% \Linf-algebra associated to a dg Lie algebra, we may take $\D x$ to
% equal the bracket $\{x\}$.

In \cite{Berglund}, Berglund develops homological perturbation theory
for $\infty$-algebras over general Koszul operads. His approach extends to
curved \Linf-algebras, as we now explain. See Dotsenko et al.\
\cite{DSV} for an alternative approach.

By analogy with the \emph{tensor trick} of Gugenheim et al.\
\cite{GLS}, associate to a contraction of filtered complexes a
contraction $(\C(V),\C(W),\p,\i,\h)$, where $\C(V)$ and $\C(W)$ have
the differentials $\D$ and $d$ induced by the differentials $\D$ and
$d$ on $V$ and $W$,
\begin{align*}
  \p & = \bigoplus_{n=0}^\infty p^{\otimes n} , &
  \i & = \bigoplus_{n=0}^\infty i^{\otimes n} ,
\end{align*}
are the morphisms of coalgebras induced by $p$ and $i$, and the $\h$ is
the symmetrization of the homotopy
\begin{equation*}
  \bigoplus_{n=0}^\infty \sum_{k=1}^n (ip)^{k-1} \otimes h \otimes 1^{n-k}
\end{equation*}
on the tensor coalgebra, given by the explicit formula
\begin{equation*}
  \h = \bigoplus_{n=0}^\infty \frac{1}{n} \sum_{k=1}^n
  % \sum_{\epsilon_1,\ldots,\epsilon_{n-1}\in\{0,1\}} \binom{n-1}{\epsilon_1+\ldots+\epsilon_{n-1}}^{-1}
    \sum_{\epsilon_1,\ldots,\epsilon_{n-1}\in\{0,1\}} \binom{n-1}{\sum\epsilon_i}^{-1}
  (ip)^{\epsilon_1} \otimes \ldots \otimes (ip)^{\epsilon_{k-1}} \otimes h \otimes (ip)^{\epsilon_k} \otimes \ldots \otimes (ip)^{\epsilon_{n-1}} .
\end{equation*}
The following lemma is due to Berglund~\cite{Berglund}.
\begin{lemma}
  \label{fh}
  We have $(\p\otimes\p)\nabla\h= 0$, $(\h\otimes\p)\nabla\h=(\p\otimes\h)\nabla\h= 0$, and
  $(\h\otimes\h)\nabla\h=0$.
\end{lemma}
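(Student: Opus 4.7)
The plan is to exploit two observations: every summand of $\h$ produces a tensor with exactly one slot of the form $hx_j$, the remaining slots carrying either the identity or the idempotent $ip$; and the side conditions $h^2 = 0$, $ph = 0$, $hi = 0$ inherited from the contraction of $V$.

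The first identity $(\p \otimes \p)\nabla\h = 0$ is immediate. The coproduct $\nabla$ only splits slots between the two factors of $\C(V) \ohat \C(V)$ without modifying their content, so the unique $h$-slot in any given summand ends up on one side of the split; applying $\p = \bigoplus_n p^{\otimes n}$ there contains a factor $ph = 0$, and the summand vanishes.

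For the remaining three identities I would reduce, via the standard dictionary between $\C(V)$ and the symmetric tensors inside the tensor coalgebra $T(V)$, to the analogous statement on $T(V)$. There the tensor-trick homotopy is $H = \bigoplus_n\sum_{k=1}^n (ip)^{\otimes(k-1)}\otimes h\otimes 1^{\otimes(n-k)}$, the coproduct $\nabla_T$ is deconcatenation, and $P = \bigoplus_n p^{\otimes n}$, and the identities $(H\otimes P)\nabla_T H = (P\otimes H)\nabla_T H = (H\otimes H)\nabla_T H = 0$ hold term-by-term. The case analysis is uniform: in any summand, the outer $h$-slot either receives the inner $h$ (giving $h^2 = 0$), or receives an inner $(ip)$-prefix (giving $i(ph) = 0$), or else the inner $h$ lands on a slot already bearing an $(ip)$-prefix from the outer $H$ (giving $(hi)p = 0$). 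Averaging these tensor-level identities over the symmetric group recovers the claimed identities for $\h$, since $\h$ is by construction the symmetrization of $H$.

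The main obstacle will be the bookkeeping needed to match the weights $\frac{1}{n}\binom{n-1}{\sum\epsilon_i}^{-1}$ appearing in the definition of $\h$ with those produced by this symmetrization procedure, and to check that $\nabla$ on $\C(V)$ is correctly related to $\nabla_T$ under the averaging in characteristic zero. Once this compatibility is established, the three remaining identities follow from the uniform case analysis above, driven entirely by the side conditions $h^2 = ph = hi = 0$.
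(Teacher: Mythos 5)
Your treatment of the first identity is fine and is essentially the paper's: the unshuffle coproduct only redistributes slots, the unique $h$-slot of each summand of $\h$ lands on one side, and $\p$ applied there produces $ph=0$ (the paper phrases this as $(\p\otimes\p)\nabla\h=\nabla\p\h=0$).

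The gap is in the reduction of the other three identities to the unsymmetrized homotopy $H=\bigoplus_n\sum_k(ip)^{\otimes(k-1)}\otimes h\otimes1^{\otimes(n-k)}$ followed by averaging. Two things go wrong. First, your three-case analysis is exhaustive only for $H$, where the slots are rigidly ordered as $(ip),\ldots,(ip),h,1,\ldots,1$, so that ``outer $h$ on an inner $1$'' forces ``inner $h$ under an outer $(ip)$'' and vice versa. For the symmetrized $\h$ the summands are arbitrary arrangements of one $h$ with $1$'s and $(ip)$'s, and the omitted configuration --- outer $h$ on a slot whose inner content is $1$, while the inner $h$ sits under an outer $1$ --- does occur and gives individually nonzero terms. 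For instance, on a $3$-tensor the inner summand $1\otimes1\otimes h$, the unshuffle putting slots $1,3$ on the left and slot $2$ on the right, and the outer term $(h\otimes1)\otimes h$ contribute a nonzero multiple of $(hx_1\otimes hx_3)\otimes hx_2$ to $(\h\otimes\h)\nabla\h$. Such terms cancel only in pairs, against the summand with the roles of the two $h$'s exchanged, using that $h$ is odd (so $(1\otimes h)(h\otimes1)=-(h\otimes1)(1\otimes h)$) together with the symmetry of the weights $\tfrac1n\binom{n-1}{\sum\epsilon_i}^{-1}$; the same phenomenon already occurs in the side condition $\h^2=0$, which is likewise not a termwise consequence of $h^2=ph=hi=0$. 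Second, the averaging step itself does not go through as stated: $\h$ is the average of the conjugates $\sigma H\sigma^{-1}$, so $(\h\otimes\h)\nabla\h$ is an average over three \emph{independent} permutations, and the cross-terms $(\sigma_1H\sigma_1^{-1}\otimes\sigma_2H\sigma_2^{-1})\nabla(\sigma_3H\sigma_3^{-1})$ are not instances of the identity $(H\otimes H)\nabla H=0$ you proved on the tensor coalgebra; deconcatenation does not commute with the symmetrization idempotent, so the three averages cannot be pulled together. (A small additional slip: the labels in your two middle cases are swapped --- outer $h$ on inner $(ip)$ gives $(hi)p=0$, inner $h$ under outer $(ip)$ gives $i(ph)=0$ --- but both vanish, so this is harmless.) The remaining three identities therefore genuinely require the explicit formula for $\h$ and a signed, weighted cancellation argument, which is the content of Berglund's computation that the paper defers to.
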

\begin{proof}
  We have $(\p\otimes\p)\nabla\h=\nabla\p\h=0$. The remaining three identities follow
  from the explicit formulas for $\p$ and $\h$.
\end{proof}

The Maurer--Cartan element $\mu=\delta-\D$ on $\C(L)$,
\begin{align*}
  \mu(x_1\otimes\ldots\otimes x_k) &= \{\}\otimes x_1\otimes\ldots\otimes x_k \\
  &+ \sum_{i=1}^k (-1)^{|x_1|+\cdots+|x_{i-1}|} x_1\otimes\ldots\otimes x_{i-1}\otimes \bigl(
  \{x_i\}-\D x_i \bigr) \otimes x_{i+1}\otimes\ldots\otimes x_k \\
  &+ \frac{1}{k!} \sum_{\pi\in S_k} \sum_{\ell=2}^k (-1)^\epsilon
  \binom{k}{\ell} \, \{x_{\pi(1)},\ldots,x_{\pi(\ell)}\} \otimes x_{\pi(\ell+1)}\otimes\ldots\otimes x_{\pi(k)} ,
\end{align*}
satisfies $\D_\mu=\delta$. The formulas of homological perturbation theory
yield a differential $d_\mu$ on $\C(W)$ and morphisms of complexes
$\p_\mu:\C(L)\to\C(W)$ and $\i_\mu:\C(W)\to\C(L)$. The following theorem is
the analogue for \Linf-algebras of results of Gugenheim et al.\
\cite{GLS} for \Ainf-algebras.
\begin{theorem}
  \label{Berglund}
  The linear maps $\p_\mu:\C(L)\to\C(W)$ and $\i_\mu:\C(W)\to\C(L)$ are
  morphisms of filtered graded cocommutative coalgebras
  \begin{align*}
    (\p_\mu \otimes \p_\mu) \nabla &= \nabla \p_\mu , & (\i_\mu \otimes \i_\mu) \nabla &= \nabla \i_\mu .
  \end{align*}
  The differential $d_\mu$ is a coderivation of $\C(W)$.
\end{theorem}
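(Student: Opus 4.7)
The plan is to follow the standard template of the coalgebra perturbation lemma (cf.\ Gugenheim--Lambe--Stasheff \cite{GLS}), using the side conditions of Lemma~\ref{fh} to ensure that the coalgebra structure is preserved. The key observations to invoke are: $\mu = \delta - \D$ is a coderivation of $\C(L)$, since both $\delta$ and $\D$ are coderivations of the cocommutative coalgebra $\C(L)$; $\p$ and $\i$ are morphisms of filtered graded cocommutative coalgebras, by construction via the tensor trick; and pro-nilpotence of $L$ ensures that $\mu$ raises the filtration on $\C(L)$, so the geometric series defining $\p_\mu$, $\i_\mu$, and $d_\mu$ converge and each identity can be verified one filtration level at a time.

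To prove that $\i_\mu = \sum_{n\ge 0} (-\h\mu)^n \i$ is a coalgebra morphism, I would compare $\nabla\i_\mu$ and $(\i_\mu \otimes \i_\mu)\nabla$ order-by-order in $\mu$. On the left, applying $\nabla$ and repeatedly pushing it past $\mu$ via $\nabla\mu = (\mu\otimes 1 + 1\otimes\mu)\nabla$ produces a sum of terms of the form $(A_1 \otimes A_2)\nabla$ with each $A_j$ a word in $\h$ and $\mu$, together with a leftover factor of $\nabla\h$. On the right, expansion of the tensor product gives $\sum_{a+b=n}\bigl((-\h\mu)^a \otimes (-\h\mu)^b\bigr)\nabla\i$. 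The terms generated on the left but not appearing on the right are precisely those in which $\h$ either lands on both tensor factors of a $\nabla\h$ or sits adjacent to another $\h$ factor; these configurations are killed by the identities $(\h\otimes\h)\nabla\h = 0$ and $(\p\otimes\h)\nabla\h = (\h\otimes\p)\nabla\h = 0$ of Lemma~\ref{fh}. The dual argument for $\p_\mu = \p\sum_{n\ge 0}(-\mu\h)^n$ uses additionally the identity $(\p\otimes\p)\nabla\h = 0$.

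Once $\p_\mu$ and $\i_\mu$ are known to be coalgebra morphisms, the coderivation property of $d_\mu$ becomes formal. The perturbation lemma provides the intertwining relations $\p_\mu\delta = d_\mu\p_\mu$ and $\p_\mu\i_\mu = 1_{\C(W)}$; combined with the coderivation property of $\delta$ on $\C(L)$, they give
\[
  \nabla d_\mu = \nabla d_\mu \p_\mu \i_\mu = \nabla \p_\mu \delta\i_\mu
  = (\p_\mu \otimes \p_\mu)\nabla\delta\i_\mu
  = (d_\mu \otimes 1 + 1 \otimes d_\mu)\nabla,
\]
where the last equality uses the coalgebra morphism properties of $\p_\mu$ and $\i_\mu$ together with $\p_\mu\i_\mu = 1$.

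The hard part will be the combinatorial matching in the second paragraph: pushing $\nabla$ through the alternating string $(\h\mu)^n$ generates a proliferating collection of terms, and one must verify that the four identities of Lemma~\ref{fh} precisely account for the required cancellations at every order. This can be organized either as an induction on the power of $\mu$ (with pro-nilpotence as the ultimate source of convergence) or as a direct bijective matching of summands, mirroring the classical $A_\infty$ argument of Gugenheim--Lambe--Stasheff; the only new feature in the curved $L_\infty$ setting is bookkeeping the constant term $\{\}$ of $\mu$, which is handled uniformly by the fact that $\mu$ is a coderivation on all of $\C(L)$.
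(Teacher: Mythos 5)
Your third paragraph (the coderivation property of $d_\mu$) is fine and essentially matches the paper's own computation, and your overall framing is reasonable. But the core of the argument --- the second paragraph --- has a genuine gap, and it is exactly at the point you flag as ``the hard part.'' The four identities of Lemma~\ref{fh} only assert the vanishing of $(A\otimes B)\nabla\h$ for $A,B\in\{\p,\h\}$; they do not give a formula for $\nabla\h$ itself, nor do they control $\nabla\h$ when it is pre-composed with other operators. In a direct order-by-order expansion this is fatal: in $\nabla(\h\mu)^n\i$ the comultiplication immediately meets a bare $\h$ with nothing in front of it, and in $\nabla\p(\mu\h)^n=(\p\otimes\p)(\mu\otimes1+1\otimes\mu)\nabla\h(\mu\h)^{n-1}$ the operator sitting in front of $\nabla\h$ is $\p\mu\otimes\p$ or $\p\otimes\p\mu$, which is \emph{not} one of the four combinations killed by Lemma~\ref{fh} (note $\p\mu\ne d\p$, unlike $\p\D=d\p$). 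So the claim that the mismatched terms ``are precisely those killed by the identities'' is not justified, and I do not see how to close the induction as stated.

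The paper avoids this in two different ways, neither of which is an order-by-order matching. For $\p_\mu$ it uses the recursion $\p_\mu=\p-\p_\mu\mu\h$, which expands $\p_\mu\otimes\p_\mu$ into terms each of which ends in one of the four projections of Lemma~\ref{fh}, giving $(\p_\mu\otimes\p_\mu)\nabla\h=0$ on the nose; it then applies the differential to this vanishing expression and uses the homotopy identity $\D_\mu\h+\h\D=1-\i\p+\mu\h$ to deduce $(\p_\mu\otimes\p_\mu)\nabla(1+\mu\h)=\nabla\p_\mu(1+\mu\h)$, and inverts $1+\mu\h$. For $\i_\mu$ it does not use Lemma~\ref{fh} at all: it proves by induction on the tensor degree an explicit formula for $(\h\mu)^k\i\p$ on $(sL)^{\otimes n}_{S_n}$ as a sum of decomposable tensors, from which $\nabla(\h\mu)^k\i\p=\sum_i\bigl((\h\mu)^i\otimes(\h\mu)^{k-i}\bigr)\nabla\i\p$ follows directly (inserting $\i=\i\p\i$ to reduce to this case). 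If you want to salvage your approach, you would need to supply either these devices or an explicit decomposition of $\nabla\h$ modulo terms involving $\D\h$ and $\h\D$; as written, the cancellation mechanism is asserted rather than proved.
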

\begin{proof}
  The proof follows Berglund \cite{Berglund}. We have
  $\p_\mu=\p-\p_\mu\mu\h$, hence $\p_\mu\i=\p\i=1$. It follows that
  \begin{equation*}
    (\p_\mu\otimes\p_\mu)\nabla\i = (\p_\mu\i\otimes\p_\mu\i)\nabla = \nabla .
  \end{equation*}
  We also have
  \begin{equation*}
    (\p_\mu \otimes \p_\mu) \nabla \h = \bigl( \p\otimes\p - (\p_\mu\mu\otimes1)(\h\otimes\p) -
    (1\otimes\p_\mu\mu)(\p\otimes\h) - (\p\mu\otimes\p\mu)(\h\otimes\h) \bigr) \nabla \h ,
  \end{equation*}
  which vanishes by Lemma~\ref{fh}, proving that
  $(\p_\mu \otimes \p_\mu) \nabla \h = 0$. It follows from this equation that
  \begin{align*}
    0 &= ( d_\mu\otimes1 + 1\otimes d_\mu ) (\p_\mu\otimes \p_\mu)\nabla\h + (\p_\mu\otimes \p_\mu)\nabla\h\D \\
      &= (\p_\mu\otimes \p_\mu)( \D_\mu\otimes1+ 1\otimes \D_\mu )\nabla\h + (\p_\mu\otimes \p_\mu)\nabla\h\D \\
      &= (\p_\mu\otimes \p_\mu)\nabla(\D_\mu\h+\h\D) \\
      &= (\p_\mu\otimes \p_\mu)\nabla \mu\h + (\p_\mu\otimes \p_\mu)\nabla - (\p_\mu\otimes \p_\mu)\nabla\i\p \\
      &= (\p_\mu\otimes \p_\mu)\nabla \mu\h + (\p_\mu\otimes \p_\mu)\nabla - (\p_\mu\i\otimes
        \p_\mu\i)\nabla\p_\mu(1+\mu\h) \\
      &= (\p_\mu\otimes \p_\mu)\nabla \mu\h + (\p_\mu\otimes \p_\mu)\nabla - \nabla\p_\mu(1+\mu\h) .
  \end{align*}
  This proves the formula
  \begin{equation*}
    (\p_\mu\otimes \p_\mu)\nabla(1+\mu\h) = \nabla \p_\mu(1+\mu\h) .
  \end{equation*}
  Since $1+\mu\h$ is invertible, we conclude that $\p_\mu$ is a morphism.

  We turn to $\i_\mu$. It is seen, by induction on $n$, that the
  restriction of $(\h\mu)^k\i\p$ to $(sL)^{\otimes n}_{S_n}\subset \C(L)$ equals
  \begin{equation*}
    (\h\mu)^k\i\p = \sum_{1\le i_1\le\ldots\le i_k\le n} \i\p^{\otimes(i_1-1)}\otimes h\mu \otimes
    \i\p^{\otimes(i_2-i_1-1)}\otimes h\mu \otimes \ldots \otimes h\mu \otimes \i\p^{\otimes(n-i_k)} ,
  \end{equation*}
  and hence that
  \begin{equation*}
    \nabla(\h\mu)^k\i\p = \sum_{i=0}^k \bigl( (\h\mu)^i\otimes(\h\mu)^{k-i} \bigr) \nabla \i\p .
  \end{equation*}
  It follows that
  \begin{align*}
    \nabla\i_\mu &= \nabla(1+\h\mu)^{-1} \i = \nabla(1+\h\mu)^{-1} \i\p\i \\
    &= \bigl( (1+\h\mu)^{-1} \otimes (1+\h\mu)^{-1} \bigr) \nabla \i \\
    &= \bigl( (1+\h\mu)^{-1} \otimes (1+\h\mu)^{-1} \bigr) ( \i \otimes \i ) \nabla = (
      \i_\mu \otimes \i_\mu ) \nabla .
  \end{align*}

  To show that $d_\mu=d+\p_\mu\mu\i$ is a graded coderivation, it suffices
  to show that $\p_\mu\mu\i$ is. We have
  \begin{align*}
    \nabla\p_\mu\mu\i &= (\p_\mu\otimes\p_\mu)(\mu\otimes1+1\otimes\mu)(\i\otimes\i) \nabla \\
             &= ( \p_\mu\mu\i \otimes \p_\mu\i + \p_\mu\i \otimes \p_\mu\mu\i ) \nabla \\
             &= ( \p_\mu\mu\i \otimes 1 + 1 \otimes \p_\mu\mu\i ) \nabla .
               \qedhere
  \end{align*}
\end{proof}

Denote the curved \Linf-algebra with underlying filtered graded vector
space $W$ associated to the codifferential $d_\mu$ on $\C(W)$ by
$\breve{L}$. Then $\p_\mu$ and $\i_\mu$ induce \Linf-morphisms $p_\mu$ from
$L$ to $\breve{L}$ and $i_\mu$ from $\breve{L}$ to $L$.

\begin{proposition}
  The morphism $\MC(p_\mu):\MC(L)\to\MC(\breve{L})$ restricts to a
  bijection from $\MC(L,h)$ to $\MC(\breve{L})$, with inverse
  $\MC(i_\mu)$.
\end{proposition}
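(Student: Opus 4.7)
The plan hinges on the equivalence between Maurer--Cartan elements $x\in L^0$ of a pro-nilpotent curved \Linf-algebra $L$ and group-like elements $\exp(x)=\sum_{n\geq 0} x^{\otimes n}/n!\in\C(L)$ annihilated by $\delta$: this assignment is well-defined by pro-nilpotence, and under it an \Linf-morphism $f$ acts on Maurer--Cartan sets by restriction of the coalgebra map $\C(f)$ to group-likes. Thus $p_\mu$ and $i_\mu$ act on Maurer--Cartan elements via the coalgebra maps $\p_\mu$ and $\i_\mu$ constructed in Theorem~\ref{Berglund}, and the identities we need will all follow from the perturbed contraction relations.

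The inclusion $p_\mu(\MC(L,h))\subset\MC(\breve L)$ is automatic from functoriality of $\MC$. For the opposite inclusion, I would verify that $h\cdot i_\mu(y)=0$ for every $y\in\MC(\breve L)$. The key identity is $\h\i_\mu=0$: indeed $\h\i=0$ because in each summand of $\h$ the $h$-slot kills the corresponding $i$ by $hi=0$, and $\h^2=0$ is part of the tensor-trick contraction; expanding $\i_\mu=(1+\h\mu)^{-1}\i$ as a geometric series then gives $\h\i_\mu=0$. Since $\i_\mu$ is a coalgebra morphism we have $\i_\mu(\exp(y))=\exp(i_\mu(y))$, whose length-one tensor component is $i_\mu(y)$; because $\h$ preserves tensor length and acts as $h$ on the length-one summand, extracting the length-one component of $\h(\exp(i_\mu(y)))=0$ yields $h\cdot i_\mu(y)=0$.

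The HPT identity $\p_\mu\i_\mu=1$ on $\C(\breve L)$, applied to $\exp(y)$, gives $\exp(p_\mu(i_\mu(y)))=\exp(y)$, hence $p_\mu(i_\mu(y))=y$ by extracting length-one components. For the reverse composition, take $x\in\MC(L,h)$. The condition $hx=0$ forces $\h(\exp(x))=0$, because every term in the expansion of $\h(x^{\otimes n})$ contains $hx$ in some tensor slot; consequently $\h_\mu(\exp(x))=(1+\h\mu)^{-1}\h(\exp(x))=0$. The Maurer--Cartan condition gives $\D_\mu\exp(x)=\delta\exp(x)=0$. Substituting these into the perturbed contraction relation $\i_\mu\p_\mu=1-\D_\mu\h_\mu-\h_\mu\D_\mu$ at $\exp(x)$ yields $\i_\mu\p_\mu(\exp(x))=\exp(x)$, which translates to $i_\mu(p_\mu(x))=x$.

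The main obstacle I anticipate is the identity $\h\i_\mu=0$ and its translation into $h\cdot i_\mu(y)=0$: one must keep track of which slots of the symmetrized formula for $\h$ carry $h$ versus $ip$, and confirm that on the length-one tensor component the operator $\h$ reduces to the naive action of $h$. Once this is secured, the two composition identities become direct consequences of the perturbed contraction relations together with the Maurer--Cartan equation $\delta\exp(x)=0$.
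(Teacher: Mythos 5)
Your argument is correct and follows essentially the same route as the paper's own proof: pass to group-like elements $\exp(x)$ in $\C(L)$, use that $hx=0$ forces $\h\exp(x)=0$ and that $\h\i_\mu=0$, and then invoke the perturbed contraction identities $\p_\mu\i_\mu=1$ and $\i_\mu\p_\mu=1-\D_\mu\h_\mu-\h_\mu\D_\mu$. The only differences are cosmetic: you spell out the verification of $\MC(p_\mu)\MC(i_\mu)=1$ that the paper leaves implicit, while omitting the paper's bonus observation that $\MC(p_\mu)$ agrees with $p$ on the gauge-fixed locus.
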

\begin{proof}
  Given $x\in L^0$, denote by $\exp(x)$ the element
  \begin{equation*}
    \exp(x) = \sum_{n=0}^\infty \frac{1}{n!} \, x^{\otimes n} \in \C(L) .
  \end{equation*}
  The Maurer-Cartan equation for $x$ is equivalent to the equation
  \begin{equation*}
    \D_\mu\exp(x) = 0 .
  \end{equation*}
  We also have $\exp(\MC(p_\mu)x)=\p_\mu\exp(x)$, $x\in L^0$, and
  $\exp(\MC(i_\mu)y)=\i_\mu\exp(y)$, $y\in M^0$.

  If $hx=0$, we have $\h\exp(x)=0$, and hence
  \begin{equation*}
    \exp(\MC(p_\mu)x) = \p_\mu\exp(x) = \p(1+\mu\h)^{-1}\exp(x) = \p\exp(x)
    = \exp(p(x)) .
  \end{equation*}
  That is, $\MC(p_\mu)x=p(x)$.

  Conversely, if $y\in\MC(\breve{L})$, then
  \begin{equation*}
    \h\exp(\MC(i_\mu)y) = \h\i_\mu\exp(y) = \h(1+\h\mu)^{-1}\i\exp(y) = 0 ,
  \end{equation*}
  and it follows that $h\MC(i_\mu)y=0$. Thus $\MC(i_\mu)$ maps
  $\MC(\breve{L})$ into $\MC(L,h)$.

  If $x\in\MC(L,h)$, we have
  \begin{align*}
    \exp(\MC(i_\mu)\MC(p_\mu)x))
    &= \i_\mu \p_\mu \exp(x) = \bigl( 1 - \D_\mu\h_\mu - \h_\mu\D_\mu \bigr)
      \exp(x) \\
    &= \bigl( 1 - \D_\mu(1+\h\mu)^{-1}\h - \h(1+\mu\h)^{-1}\D_\mu \bigr) \exp(x)
    = \exp(x) .
  \end{align*}
  It follows that $\MC(i_\mu)\MC(p_\mu)=1$ on $\MC(L,h)$.  
\end{proof}

Applied to the simplicial contracting homotopy $s_\bullet$ on the simplicial
curved \Linf-algebra $\DR_\bullet\otimes L$, we obtain a natural identification
between the cofibration
\begin{equation*}
  \MC(i_\mu) : \MC(W_\bullet\otimes L) \to \MC_\bullet(L)
\end{equation*}
of fibrant simplicial sets, and the morphism
$\gamma_\bullet(L)=\MC(\DR_\bullet\ohat L,s_\bullet) \to \MC_\bullet(L)$. After this identification,
the cosection $\MC(p_\mu)$ of $\MC(i_\mu)$ is the holonomy map
$\rho:\MC_\bullet(L)\to\gamma_\bullet(L)$ of Theorem~\ref{main}.

It remains to discuss the functoriality of $\gamma_\bullet(L)$. Let
$f:L\to M$ be a fibration of curved \Linf-algebras.  From the explicit
formulas, together with the fact that $p_\mu\circ i_\mu$ is the identity on
$W_\bullet\otimes L$ and $W_\bullet\otimes M$ endowed with the curved \Linf-algebra structures
constructed above, we see that $f$ induces a strict morphism
$W_\bullet\otimes f$ from $W_\bullet\otimes L$ to
$W_\bullet\otimes M$.
\begin{proposition}
  The functor $\gamma_\bullet(L)$ is an exact functor from the strict category
  $\L$ of curved \Linf-algebras to the category $\S$ of fibrant
  simplicial sets.
\end{proposition}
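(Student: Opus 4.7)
The plan is to combine Theorem \ref{main} with the exactness of $\MC_\bullet$ established in Proposition \ref{exact}. Theorem \ref{main} exhibits $\gamma_\bullet(L)$ as a natural retract of $\MC_\bullet(L)$ via the pair $(i:\gamma_\bullet\hookrightarrow\MC_\bullet,\rho:\MC_\bullet\to\gamma_\bullet)$; naturality in a strict morphism $f:L\to M$ then makes $\gamma_\bullet(f)$ a retract of $\MC_\bullet(f)$ in the arrow category of $\S$. Since the right lifting property against horn and boundary inclusions is stable under retracts in the arrow category, and $\MC_\bullet(f)$ is a (trivial) Kan fibration whenever $f$ is a strict (trivial) fibration (Proposition \ref{exact}), $\gamma_\bullet(f)$ is itself a (trivial) fibration. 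The terminal object is preserved because $\gamma_\bullet(0)\subset\MC_\bullet(0)=\ast$, and equality holds since $0$ trivially satisfies $s_n 0=0$.

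What remains is preservation of pullbacks along strict fibrations. Given a strict fibration $f:L\to M$ and a strict morphism $g:N\to M$, Lemma \ref{pullback} realizes $L\times_M N$ on the filtered graded vector space $K\oplus N$, where $K=\ker\d f$, via a section $\sigma$ of $\d f$, yielding a direct-sum decomposition $L\cong K\oplus\sigma M$. Write an element $x\in\MC(\DR_n\ohat(L\times_M N))$ as a pair $(x_K,x_N)$ under this decomposition. Because $f$ and $g$ are strict and $s_n$ acts only on the $\DR_n$ factor, $s_n$ commutes with both; using the explicit form of $G$ in the proof of Lemma \ref{pullback} (which for strict $g$ reduces to $G(x)=x_K+\sigma g(x_N)$), we obtain
\begin{equation*}
s_n F(x)=s_n x_N,\qquad s_n G(x)=s_n x_K+\sigma g(s_n x_N).
\end{equation*}
Since the two summands of $s_n G(x)$ lie in the complementary subspaces $\DR_n\ohat K$ and $\DR_n\ohat\sigma M$ of $\DR_n\ohat L$, the simultaneous vanishing of $s_n F(x)$ and $s_n G(x)$ is equivalent to $s_n x_K=s_n x_N=0$, hence to $s_n x=0$. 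Together with the left-exactness of $\MC_\bullet$, this yields a natural bijection $\gamma_n(L\times_M N)\to\gamma_n(L)\times_{\gamma_n(M)}\gamma_n(N)$ in every simplicial degree.

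The main technical point is this decoupling of the gauge condition across the pullback. It relies on two things: the strictness of $g$ (so that $g$ commutes with $s_n$), which is automatic in $\L$, and the concrete direct-sum realization $L\cong K\oplus\sigma M$ of the pullback produced in the proof of Lemma \ref{pullback}, which cleanly separates the two summands on which $s_n$ must vanish. The rest of the argument is bookkeeping; all of the substantive homotopical content has already been packaged into Theorem \ref{main} and Proposition \ref{exact}.
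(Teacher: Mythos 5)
Your proof is correct, and it rests on the same core mechanism as the paper's --- a retract argument built from the Dupont contraction --- but it performs the retraction in a different category. The paper identifies $\gamma_\bullet(L)$ with $\MC(W_\bullet\otimes L)$ for the transferred curved \Linf-structure, exhibits the comparison morphisms $W_n\otimes L\to\bigl(W(\Lambda^n_i)\otimes L\bigr)\times_{W(\Lambda^n_i)\otimes M}\bigl(W_n\otimes M\bigr)$ and their boundary analogues as retracts \emph{in $\LL$} (via $i_\mu$ and $p_\mu$) of the trivial fibrations \eqref{trivial:horn} and \eqref{trivial:boundary} from the proof of Proposition~\ref{exact}, and then applies Theorem~\ref{surjective}. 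You instead retract one functor later, in the arrow category of $\S$, using the naturality of $\rho$ from Theorem~\ref{main} together with the stability of right lifting properties under retracts. This is legitimate, at the cost of leaning on naturality of $\rho$ with respect to \emph{all} strict morphisms: the paper's preceding paragraph records the induced map $W_\bullet\otimes f$ only for fibrations $f$, but the same computation ($\C(1\otimes f)$ commutes with $\h$, $\i$, $\p$ and intertwines the perturbations $\mu$ when $f$ is strict) gives the general case, which Theorem~\ref{main} asserts. What your write-up adds is the explicit check that $\gamma_\bullet$ preserves the terminal object and pullbacks along fibrations, which the paper imports implicitly ``as in the proof of Proposition~\ref{exact}''; your observation that the gauge condition decouples --- $s_\bullet$ acts only on the $\DR_\bullet$ factor, hence commutes with strict morphisms and with the componentwise description of $L\times_MN$ --- is exactly the point that needs making. (In fact you do not need the full force of the $K\oplus\sigma M$ decomposition there: $s_nF(x)=0$ already forces $s_nx_N=0$, after which $s_nG(x)=0$ gives $s_nx_K=0$ directly.)
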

\begin{proof}
  As in the proof of Proposition~\ref{exact}, we must show that for
  each $0<i\le n$, the morphism of curved \Linf-algebras
  \begin{equation*}
    W_n \otimes L \to \bigl( W(\Lambda^n_i)\otimes L \bigr) \times_{W(\Lambda^n_i)\otimes M} \bigl( W_n \otimes M
    \bigr)
  \end{equation*}
  is a trivial fibration, and for each $n\ge0$, the morphism of
  \Linf-algebras
  \begin{equation*}
    W_n\otimes L \to \bigl( W(\partial\triangle^n)\otimes L \bigr) \times_{W(\partial\triangle^n)\otimes M} \bigl( W_n\otimes M
    \bigr)
  \end{equation*}
  is a trivial fibration. But these are retracts in $\LL$ of the
  corresponding trivial fibrations \eqref{trivial:horn} and
  \eqref{trivial:boundary}, and the result follows.
\end{proof}

It is clear from the above discussion that the inclusion
$\gamma_\bullet(L)\hookrightarrow\MC_\bullet(L)$ and holonomy $\rho:\MC_\bullet(L)\to\gamma_\bullet(L)$ are natural
transformations of exact functors from $\L$ to $\S$.

\section*{$\ell$-groupoids}

A \textbf{thinness structure} $(X_\bullet,T_\bullet)$ on a simplicial set
$X_\bullet$ is a sequence of subsets $T_n\subset X_n$, $n>0$, of thin simplices
such that every degenerate simplex is thin.
\begin{definition}
  An \textbf{$\ell$-groupoid} is a simplicial set
  $(X_\bullet,T_\bullet)$ with thinness structure such that every horn has a
  unique thin filler, and every $n$-simplex is thin if $n>\ell$.

  A \textbf{strict $\ell$-groupoid} (or $T$-complex) is an
  $\ell$-groupoid such that the faces of the thin filler of a thin horn
  (a horn all of whose faces are thin) are thin.
\end{definition}

For $\ell<2$, every $\ell$-groupoid is strict. The nerve of a bigroupoid
\cite{Benabou} is a $2$-groupoid, but is a strict $2$-groupoid if and
only if the associator is trivial. For background to these
definitions, see Dakin~\cite{Dakin} and Ashley \cite{Ashley}, for the
strict case, and \cite{Linf} in general.

If $L$ is a curved \Linf-algebra concentrated in degrees
$[-\ell,\infty)$, then $\gamma_\bullet(L)$ is an $\ell$-groupoid: the thin
$n$-simplices are the Maurer--Cartan elements $x\in\DR_n\ohat L$ whose
component of top degree $n$ vanishes.

The following result was proved for nilpotent dg Lie algebras in the
special case $\ell=2$ in \cite{Linf}*{Proposition 5.8}. The proof in the
for general case is essentially the same.
\begin{proposition}
  If $L$ is a semiabelian curved \Linf-algebra and $L^k=0$ for
  $k<-\ell$, then $\gamma_\bullet(L)$ is a strict $\ell$-groupoid.
\end{proposition}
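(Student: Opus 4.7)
The plan is to reduce the strictness axiom to horn-filling in a smaller curved \Linf-subalgebra of $L$, where strictness is automatic. This follows the strategy of \cite{Linf}*{Proposition 5.8}.

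The condition is vacuous for $n \le 1$, since $0$-simplices carry no thinness data. Fix $n \ge 2$ and consider a thin horn $\Lambda^n_i \to \gamma_\bullet(L)$. Each of the $n$ given faces is a thin $(n-1)$-simplex of $\gamma_\bullet(L)$: decomposing a Maurer--Cartan element by form degree as $y = y_0 + \cdots + y_{n-1}$ with $y_k \in \DR_{n-1}^k \otimes L^{-k}$, thinness of the face means $y_{n-1} = 0$. Hence each face lies in the sub-simplicial set $\gamma_\bullet(L') \subset \gamma_\bullet(L)$, where $L' := L^{\ge -(n-2)}$. By the semiabelian hypothesis, $L'$ is a curved \Linf-subalgebra of $L$ (for $n \ge 3$ this is an instance of the definition; for $n = 2$, $L' = L^{\ge 0}$ is automatically closed since brackets raise degree, and the curvature lies in $L^1 \subset L^{\ge 0}$). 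Thus the thin horn factors through $\gamma_\bullet(L')$.

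Since $L'$ is concentrated in degrees $\ge -(n-2)$, the preceding proposition identifies $\gamma_\bullet(L')$ as an $(n-2)$-groupoid. In particular, since $n > n-2$, every $n$-simplex of $\gamma_\bullet(L')$ is thin, so the horn admits a unique filler $y \in \gamma_n(L')$. Viewed in $\gamma_n(L)$, this $y$ has vanishing form-degree components $y_{n-1} = y_n = 0$, since $L'$ is trivial in those degrees. Therefore $y$ is thin in $\gamma_n(L)$ and \emph{every} face of $y$---including the $i$-th---is thin, as $y_{n-1} \equiv 0$. Since $\gamma_\bullet(L)$ is an $\ell$-groupoid, its thin filler is unique; hence this $y$ is the thin filler of the original horn, and all of its faces are thin as required.

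The argument presents no significant technical obstacle: the key insight is that a thin $(n-1)$-simplex of $\gamma_\bullet(L)$ automatically lies in the sub-\Linf-algebra $L^{\ge -(n-2)}$, and the semiabelian hypothesis is precisely what ensures this subspace is closed under the curved \Linf-structure, reducing the strictness problem to trivial horn-filling one degree lower.
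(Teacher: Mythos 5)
Your proof is correct, but it takes a genuinely different route from the paper's. Both arguments begin with the same key observation: a thin horn in $\gamma_\bullet(L)$ lies in $\DR(\Lambda^n_i)\ohat L^{\ge 2-n}$, and semiabelianness makes $L^{\ge 2-n}$ a curved \Linf-subalgebra (your separate treatment of the boundary case $n=2$, where $L^{\ge 0}$ is closed for degree reasons alone, is a correct and worthwhile observation that the paper's definition of semiabelian does not literally cover). The paper then proceeds \emph{explicitly}: it extends the horn to $\triangle^n$ via the section $\sigma$ of Lemma~\ref{extension}, writes the thin filler as the limit of the iteration $x_{k+1}=x_0-\sum_{\ell\ge2}\frac{1}{\ell!}(p_nh^i_n+s_n)\{x_k^{\otimes\ell}\}$, and checks degree by degree that each $x_k$ stays in $\DR_n\ohat L^{\ge2-n}$, whence all faces of the filler are thin. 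You instead argue softly: the thin horn factors through the simplicial subset $\gamma_\bullet(L^{\ge 2-n})$, which is fibrant (by exactness of $\gamma_\bullet$, or by the assertion preceding the proposition that it is an $(n-2)$-groupoid), so it admits a filler there; that filler has vanishing components in form degrees $n-1$ and $n$ because $L^{\ge2-n}$ vanishes in degrees $1-n$ and $-n$, so it is a thin filler with all faces thin, and uniqueness of thin fillers in the $\ell$-groupoid $\gamma_\bullet(L)$ identifies it with \emph{the} thin filler. Your version avoids the Dupont-homotopy formulas entirely, at the cost of leaning on the fibrancy of $\gamma_\bullet$ applied to the subalgebra and on the uniqueness clause of the $\ell$-groupoid axiom; the paper's version is self-contained at this point and, consistent with its emphasis on explicit holonomy, actually exhibits the filler. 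Only minor polish is needed: you should note explicitly that the lower-dimensional faces of the horn also land in $\gamma_\bullet(L^{\ge2-n})$ (immediate, since face maps restrict to the subcomplex), and that for existence of the filler the Kan property of $\gamma_\bullet(L^{\ge2-n})$ already suffices, so the appeal to the $(n-2)$-groupoid structure can be weakened to a result the paper actually proves.
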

\begin{proof}
  A horn $y\in\Hom(\Lambda^n_i,\gamma_\bullet(L))$ is thin if and only if
  $y\in\DR(\Lambda^n_i)\ohat L^{\ge2-n}$. The extension $\sigma y$ of
  $y$ to $\triangle^n$ of Lemma~\ref{extension} satisfies
  $\sigma y\in\DR_n\ohat L^{\ge2-n}$.

  The thin filler $x\in\gamma_n(L)$ of $y$ is the limit
  $x = \lim_{k\to\infty} x_k$ where
  \begin{equation*}
    x_0 = \epsilon_n^iy + d(p_nh^i_n+s_n)\sigma y + \{(p_nh^i_n+s_n)\sigma y\}
  \end{equation*}
  and
  \begin{equation*}
    x_{k+1} = x_0 - \sum_{\ell=2}^\infty \frac{1}{\ell!} (p_nh_n^i+s_n) \{ x_k^{\otimes\ell}
    \} .
  \end{equation*}
  Since $L$ is semiabelian, $x_k\in\DR_n\otimes L^{\ge2-n}$ for all
  $k$. Hence $x\in\DR_n\ohat L^{\ge2-n}$, and $\partial_ix$ is thin.
\end{proof}

\section*{Acknowledgements}
\setstretch{1.15}

The idea for this paper was planted at the program on Higher
Categories and Categorification at MSRI/SLMath in Spring 2020. We are
grateful to Chris Kapulkin and the other participants in the seminar
on cubical sets. Ruggero Bandiera suggested some improvements to the
exposition of Theorem~\ref{Berglund}. We thank a referee for pointing
out the relevance of \cite{RV}. This paper is based upon work carried
out in the MSRI program ``Higher Categories and Categorification''
supported by the NSF under Grant No.\ DMS-1928930, and by Simons
Foundation Collaboration Grant 524522.

%\pagebreak

\section*{References}

\begin{biblist}

\bib{Ashley}{article}{
   author={Ashley, N.},
   title={Simplicial $T$-complexes and crossed complexes: a nonabelian
     version of a theorem of Dold and Kan},
%   note={With a preface by Ronald Brown},
   note={Warszawa: Instytut Matematyczny Polskiej Akademi Nauk,
     1988. \url{http://eudml.org/doc/268359}}
   journal={Dissertationes Math. (Rozprawy Mat.)},
   volume={265},
   date={1988},
   pages={61 pp.},
%   issn={0012-3862},
%   review={\MR{0959431}},
}

\bib{Benabou}{article}{
   author={B\'enabou, Jean},
   title={Introduction to bicategories},
   conference={
      title={Reports of the Midwest Category Seminar},
   },
   book={
      series={Lecture Notes in Math.},
      volume={No. 47},
      publisher={Springer, Berlin-New York},
   },
   date={1967},
   pages={1--77},
   note={doi \href{https://doi.org/10.1007/BFb0074299} {10.1007/BFb0074299}},
%   review={\MR{0220789}},
}

\bib{Berglund}{article}{
  author={Berglund, Alexander},
  title={Homological perturbation theory for algebras over operads},
  journal={Algebr. Geom. Topol.},
  volume={14},
  date={2014},
  number={5},
  pages={2511--2548},
%  issn={1472-2747},
%  review={\MR{3276839}},
  note={doi \href{https://doi.org/10.2140/agt.2014.14.2511}{10.2140/agt.2014.14.2511}},
}

\bib{BG}{article}{
   author={Bousfield, A. K.},
   author={Gugenheim, V. K. A. M.},
   title={On ${\rm PL}$ de Rham theory and rational homotopy type},
   journal={Mem. Amer. Math. Soc.},
   volume={8},
   date={1976},
   number={179},
   pages={ix+94 pp.},
%   issn={0065-9266},
%   review={\MR{0425956}},
   note={doi \href{https://doi.org/10.1090/memo/0179}{10.1090/memo/0179}},
}

\bib{BGNT}{article}{
  author={Bressler, Paul},
  author={Gorokhovsky, Alexander},
  author={Nest, Ryszard},
  author={Tsygan, Boris},
  title={Comparison of spaces associated to DGLA via higher holonomy},
  conference={
    title={$K$-theory in algebra, analysis and topology},
  },
  book={
    series={Contemp. Math.},
    volume={749},
    publisher={Amer. Math. Soc., Providence, RI},
  },
  date={2020},
  pages={1--12},
%   review={\MR{4087634}},
   note={doi \href{https://doi.org/10.1090/conm/749/15067}{10.1090/conm/749/15067}},
}

\bib{Brown}{article}{
   author={Brown, Kenneth S.},
   title={Abstract homotopy theory and generalized sheaf cohomology},
   journal={Trans. Amer. Math. Soc.},
   volume={186},
   date={1973},
   pages={419--458},
   note={doi \href{https://doi.org/10.1090/S0002-9947-1973-0341469-9}{10.1090/S0002-9947-1973-0341469-9}},
}

\bib{Cisinski}{article}{
   author={Cisinski, Denis-Charles},
   title={Invariance de la $K$-th\'{e}orie par \'{e}quivalences
     d\'{e}riv\'{e}es},
%   language={French, with English and French summaries},
   journal={J. K-Theory},
   volume={6},
   date={2010},
   number={3},
   pages={505--546},
%   issn={1865-2433},
%   review={\MR{2746284}},
   note={doi \href{https://doi.org/10.1017/is009010008jkt094}{10.1017/is009010008jkt094}},
}

\bib{Curtis}{article}{
   author={Curtis, Edward B.},
   title={Simplicial homotopy theory},
   journal={Advances in Math.},
   volume={6},
   date={1971},
   pages={107--209 (1971)},
%   issn={0001-8708},
%   review={\MR{0279808}},
   note={doi \href{https://doi.org/10.1016/0001-8708(71)90015-6}{10.1016/0001-8708(71)90015-6}},
}

\bib{Dakin}{article}{
   author={Dakin, M. K.},
   title={Kan complexes and multiple groupoid structures},
   book={
      series={Esquisses Math.},
      volume={32},
      publisher={Univ. Amiens, Amiens},
      note={\href{http://www.groupoids.org.uk/pdffiles/dakin1977groupoidStructures.pdf}{Paper
          No. 2}, xi+92 pp.},
},
   date={1983},
%   review={\MR{0766238}},
}

\bib{DSV}{article}{
  author={Dotsenko, Vladimir},
  author={Shadrin, Sergey},
  author={Vallette, Bruno},
  title={Pre-Lie deformation theory},
  journal={Mosc. Math. J.},
  volume={16},
  date={2016},
  number={3},
  pages={505--543},
%  issn={1609-3321},
%  review={\MR{3510210}},
  note={doi \href{https://doi.org/10.17323/1609-4514-2016-16-3-505-543}{10.17323/1609-4514-2016-16-3-505-543}},
}

\bib{Dupont}{article}{
   author={Dupont, Johan L.},
   title={Simplicial de Rham cohomology and characteristic classes of flat
   bundles},
   journal={Topology},
   volume={15},
   date={1976},
   number={3},
   pages={233--245},
%   issn={0040-9383},
%   review={\MR{0413122}},
   note={doi \href{https://doi.org/10.1016/0040-9383(76)90038-0}{10.1016/0040-9383(76)90038-0}},
}

\bib{Linf}{article}{
  author={Getzler, Ezra},
  title={Lie theory for nilpotent $L_\infty$-algebras},
  journal={Ann. of Math. (2)},
  volume={170},
  date={2009},
  number={1},
  pages={271--301},
%   issn={0003-486X},
%   review={\MR{2521116}},
   note={doi \href{https://doi.org/10.4007/annals.2009.170.271}{10.4007/annals.2009.170.271}},
}

\bib{cubical}{article}{
    author={Getzler, Ezra},
    title={Higher holonomy for curved \Linf-algebras 2: cubical methods},
    note={In preparation}
}

\bib{GLS}{article}{
   author={Gugenheim, V. K. A. M.},
   author={Lambe, L. A.},
   author={Stasheff, J. D.},
   title={Perturbation theory in differential homological algebra. II},
   journal={Illinois J. Math.},
   volume={35},
   date={1991},
   number={3},
   pages={357--373},
   note={doi \href{https://doi.org/10.1215/ijm/1255987784}{10.1215/ijm/1255987784}},
%   issn={0019-2082},
%   review={\MR{1103672}},
}

\bib{Hinich}{article}{
   author={Hinich, Vladimir},
   title={DG coalgebras as formal stacks},
   journal={J. Pure Appl. Algebra},
   volume={162},
   date={2001},
   number={2-3},
   pages={209--250},
%   issn={0022-4049},
%   review={\MR{1843805}},
   note={doi \href{https://doi.org/10.1016/S0022-4049(00)00121-3}{10.1016/S0022-4049(00)00121-3}},
}

\bib{Joyal}{article}{
    author={Joyal, Andr\'e},
    title={The Theory of Quasi-Categories and its Applications},
    note={\href{https://web.archive.org/web/2011070614636/http://www.crm.cat/HigherCategories/hc2.pdf}{Lectures
      at CRM} (2008)},
%    eprint={http://mat.uab.cat/~kock/crm/hocat/advanced-course/Quadern45-2.pdf},
}

\bib{Kapranov}{article}{
    author={Kapranov, Mikhail},
    title={Membranes and higher groupoids},
    note={doi \href{https://doi.org/10.48550/arXiv.1502.06166}{10.48550/arXiv.1502.06166}},
%    eprint={https://arxiv.org/abs/1502.06166},
%    note={\url{https://arxiv.org/abs/1502.06166}},
%    year={2015}
}

\bib{Kuranishi}{article}{
   author={Kuranishi, M.},
   title={On the locally complete families of complex analytic structures},
   journal={Ann. of Math. (2)},
   volume={75},
   date={1962},
   pages={536--577},
%   issn={0003-486X},
%   review={\MR{0141139}},
   note={doi \href{https://doi.org/10.2307/1970211}{10.2307/1970211}},
}

\bib{RV}{article}{
    author={Robert-Nicoud,Daniel},
    author={Vallette, Bruno},
    title={Higher Lie theory},
    note={doi \href{https://doi.org/10.48550/arXiv.2010.10485}{10.48550/arXiv.2010.10485}},
%    eprint={https://arxiv.org/abs/2010.10485},
%    note={doi \url{https://doi.org/10.48550/arXiv.2010.10485}},
%    note={\url{https://arxiv.org/abs/2010.10485}},
%    year={2020}
}

\bib{Rogers}{article}{
   author={Rogers, Christopher L.},
   title={An explicit model for the homotopy theory of finite-type Lie
   $n$-algebras},
   journal={Algebr. Geom. Topol.},
   volume={20},
   date={2020},
   number={3},
   pages={1371--1429},
%   issn={1472-2747},
%   review={\MR{4105555}},
   note={doi \href{https://doi.org/10.2140/agt.2020.20.1371}{10.2140/agt.2020.20.1371}},
}
\end{biblist}

\end{document}